\documentclass[11pt]{amsart}
\textwidth=5in
\textheight=7.5in
\usepackage{amscd}
\usepackage{amsmath}
\usepackage{amsfonts}
\newcommand{\al}{\alpha}
\newcommand{\be}{\beta}
\newcommand{\ga}{\gamma}
\newcommand{\de}{\delta}

\newcommand{\del}{\partial}

\newcommand{\grad}{\mathrm{grad}}
\newcommand{\ov}[1]{\overline{#1}}
\newcommand{\tr}[2]{\textrm{tr}_{#1} #2}

\newcommand{\ti}[1]{\tilde{#1}}
\newcommand{\ve}{\varepsilon}

\begin{document}
\newcounter{remark}
\newcounter{theor}
\setcounter{remark}{0}
\setcounter{theor}{1}
\newtheorem{claim}{Claim}
\newtheorem{theorem}{Theorem}[section]
\newtheorem{prop}{Proposition}[section]
\newtheorem{lemma}{Lemma}[section]
\newtheorem{defn}{Definition}[theor]
\newtheorem{cor}{Corollary}[section]
\newenvironment{remark}[1][Remark]{\addtocounter{remark}{1} \begin{trivlist}
\item[\hskip
\labelsep {\bfseries #1  \thesection.\theremark}]}{\end{trivlist}}
\setlength{\arraycolsep}{2pt}

\title[SCHWARZ LEMMA]{A GENERAL SCHWARZ LEMMA FOR ALMOST-HERMITIAN MANIFOLDS}
\author[VALENTINO TOSATTI]{Valentino Tosatti}

\begin{abstract}  We prove a version of Yau's Schwarz Lemma for general almost-complex manifolds
equipped with almost-Hermitian metrics.
 This requires an extension to this setting of the Laplacian comparison 
theorem. As an application we show that the product of two almost-complex manifolds does not admit
any complete almost-Hermitian metric with bisectional curvature bounded between two negative constants
that satisfies some additional assumptions.
\end{abstract}

\thanks{Part of this work was carried out while the author was visiting the Mathematics
Department of UCLA and the Morningside Center of Mathematics in Beijing; 
the author is supported in part
by a Jean de Valpine Fellowship.\\ \indent 2000 \emph{Mathematics Subject Classification} Primary 
32Q60, 53C15}

 \address{Department of Mathematics \\ Harvard University \\ Cambridge, MA 02138}

  \email{tosatti@math.harvard.edu}

\maketitle
\section{Introduction}
The classical Schwarz-Pick lemma says that a holomorphic map from the unit disc in the
complex plane into itself decreases the Poincar\'e metric. This was later extended by
Ahlfors \cite{Ah} to maps from the disc into a hyperbolic Riemann surface, and by
Chern \cite{Che} and Lu \cite{Lu} to allow more general domains and targets.
A major advance was Yau's Schwarz Lemma \cite{Ya2}, which says that 
a holomorphic map from a complete K\"ahler manifold with Ricci curvature bounded below 
into a Hermitian manifold with holomorphic bisectional curvature bounded above by a negative constant,
is distance decreasing up to a constant depending only on these bounds. This proved to be
extremely useful in differential geometry and complex analysis (see for example \cite{LSY}). Later generalizations
of this result were mainly in two directions: relaxing the curvature hypothesis or the K\"ahler assumption
(see \cite{Ro}, \cite{Ch}, \cite{GH2}) or proving similar results for harmonic
maps of Riemannian manifolds \cite{GH1}.

Here we take a different direction and generalize Yau's Schwarz Lemma to the case
when the complex structures are not integrable. Recently there has been a lot
of interest on geometric and analytic aspects of almost-complex manifolds 
(\cite{IR}, \cite{TWY}), also in relation with symplectic geometry (\cite{D}, \cite{W})
and complex analysis \cite{CGS}. Our setting is as follows:
suppose we are given two almost-complex manifolds
$M$ and $\ti{M}$, equipped with Riemannian metrics compatible with the almost complex structures
(we call such data an almost-Hermitian manifold). A map from $M$ to $\ti{M}$ is said to be almost-complex
or holomorphic,
if its differential intertwines the two almost-complex structures. On any almost-Hermitian manifold
there is a preferred choice of connection, the so-called canonical connection,
that generalizes the Chern connection in the integrable case. In general it
is different from the Levi-Civita connection, so it has nontrivial torsion, but 
is more suited for analytic questions \cite{TWY}. From now on, all geometric
quantities (Ricci and bisectional curvature, torsion, etc.) will be the ones of the canonical connection.
With this setup, we have the following
Schwarz Lemma (see Section 2 for notation):

\begin{theorem}\label{main1}
Let $(M^{2n},J,g)$ be a complete almost-Hermitian manifold with second Ricci curvature bounded from below by $-K_1$,
and with torsion and $(2,0)$ part of the curvature bounded.
Let $(\ti{M}^{2\ti{n}},\ti{J},\ti{g})$ be an almost-Hermitian manifold with bisectional curvature bounded from above by $-K_2$, $K_2>0$.
If $f:M\to \ti{M}$ is a non-constant almost-complex map, then we must have $K_1\geq 0$ and
$$f^*\ti{g} \leq \frac{K_1}{K_2} g.$$ In particular if $K_1\leq 0$ then any almost-complex map is constant.
\end{theorem}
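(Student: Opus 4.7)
Following Yau's original strategy, I would set $u = \tr{g}{f^*\ti{g}}$, the trace with respect to $g$ of the pullback of $\ti{g}$ by $f$, and aim to bound $u$ globally by $K_1/K_2$. Since $f^*\ti{g}$ is a nonnegative Hermitian form, each of its eigenvalues with respect to $g$ is dominated by $u$, so such a trace bound automatically upgrades to the desired $f^*\ti{g}\le (K_1/K_2)\,g$. If $f$ is non-constant then $\sup u>0$, which forces $K_1>0$; if $K_1\le 0$, the bound can only hold with $u\equiv 0$, i.e.\ $f$ constant.

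The analytic core of the proof is a Bochner-type identity for the canonical Laplacian $\triangle = \gi{i}{j} D_i \Dbr{j}$ applied to $\log u$. In the integrable K\"ahler-to-Hermitian case, Yau's and Royden's computations give
\[ \triangle \log u \;\ge\; -K_1 + K_2\, u, \]
where $-K_1$ arises from contracting the second Ricci of $g$ against $f^*\ti{g}$, $+K_2 u$ from the negative upper bound on the bisectional curvature of $\ti{g}$, and the bad gradient term is absorbed using Cauchy--Schwarz on $|Ddf|^2$. In the almost-Hermitian setting the canonical connections on $M$ and $\ti{M}$ have nonzero torsion and their curvatures have nonzero $(2,0)$-parts, so the commutators $[D_i,\Dbr{j}]$, the non-symmetry of the third covariant derivatives of $f$, and the failure of $D$ to commute with complex conjugation each contribute correction terms built from the Nijenhuis/torsion tensors and the $(2,0)$-components of the two curvatures. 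Using the hypotheses on $g$ to bound the $M$-side corrections and the bisectional curvature bound on $\ti{g}$ to absorb the $\ti{M}$-side corrections (at the cost of replacing $K_2$ by any $K_2'<K_2$), I would obtain
\[ \triangle \log u \;\ge\; K_2'\, u - K_1 - C. \]
Using the almost-complexity of $f$ to annihilate stray $\dbar f$ components is essential throughout, since several otherwise dangerous cross terms vanish only when $df$ intertwines $J$ and $\ti{J}$.

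To pass from the pointwise inequality to a global bound I would invoke an Omori--Yau-type maximum principle for $\triangle$ on $(M,J,g)$. This is precisely where the Laplacian comparison theorem advertised in the abstract enters: completeness of $g$, the lower bound on the second Ricci curvature, and the boundedness of the torsion and $(2,0)$-part of the curvature together let one compare $\triangle r^2$ with $r=\dist(x_0,\cdot)$ to a model function and produce an almost-maximum sequence $x_k$ with $u(x_k)\to\sup u$ and $\triangle \log u(x_k)\to 0^{-}$ (after a standard truncation that makes $\log u$ bounded above, removed at the end). Evaluating the Bochner inequality along this sequence yields $K_2'\sup u\le K_1+C$; removing the truncation and letting $K_2'\nearrow K_2$ gives $\sup u\le K_1/K_2$.

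The principal obstacle is the Bochner computation, not the maximum-principle step. The canonical connection differs from the Chern connection by explicit torsion terms, and systematically organizing the resulting correction terms — and showing that they are dominated by exactly the quantities the hypotheses in the statement control, rather than requiring stronger bounds — is where the real work lies. Once that identity is in place, the remainder of the argument is a careful transcription of Yau's 1978 proof to this setting, modulo the non-trivial extension of the Laplacian comparison theorem, which I take for granted here.
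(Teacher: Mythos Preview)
Your overall architecture is right, but you have misplaced where the torsion and $(2,0)$-curvature hypotheses actually enter, and this leads you to a conclusion weaker than the one stated. In the paper's computation (done with moving frames and the canonical connection, following Chern--Lu), the Bochner identity for $u=\tr{g}{f^*\ti g}$ is \emph{clean}:
\[
\tfrac{1}{2}\Delta u \;=\; |a^{\alpha}_{ik}|^{2} \;-\; \ov{a_i^{\alpha}} a_i^{\beta} a_k^{\gamma}\ov{a_k^{\delta}}\,\ti R^{\alpha}_{\beta\gamma\ov{\delta}} \;+\; \ov{a_i^{\alpha}} a_j^{\alpha}\, R'_{i\ov j}\,,
\]
with no torsion or $(2,0)$-curvature correction terms whatsoever. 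The point is that for the canonical connection the torsion has no $(1,1)$ component, so differentiating $f^*\ti\theta^{\alpha}=a^{\alpha}_i\theta^i$ and taking $(1,1)$ parts kills all the $\Theta$ and $\ti\Theta$ contributions, and likewise only the $(1,1)$ pieces $R^j_{ik\ov\ell}$ and $\ti R^{\alpha}_{\beta\gamma\ov\delta}$ of the curvatures survive. From this one immediately gets the sharp inequality $\tfrac{1}{2}\Delta u\ge K_2 u^{2}-K_1 u$ and hence, via the maximum principle for $\Delta u\ge Au^{1+\alpha}-Bu$, the sharp bound $\sup u\le K_1/K_2$.

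By contrast, your inequality $\Delta\log u\ge K_2' u - K_1 - C$ carries an additive constant $C$ coming from the torsion and $(2,0)$-curvature corrections you anticipate; after the Omori--Yau step and the limit $K_2'\nearrow K_2$ you would only obtain $\sup u\le (K_1+C)/K_2$, which is strictly weaker than the theorem (and would not give constancy when $K_1\le 0$). You also suggest absorbing ``$\ti M$-side corrections'' using the bisectional bound, but no hypothesis on the torsion or $(2,0)$-curvature of $\ti g$ is available, so if such terms genuinely appeared you could not control them. The resolution is that they do not appear. The bounds on torsion and on the $(2,0)$ part of the curvature of $g$ are used \emph{only} in the Laplacian comparison theorem that underlies the maximum principle, exactly as you say in your penultimate paragraph; they play no role in the Bochner step.
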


\begin{cor}
Let $(M^{2n},J,g)$ be a complete almost-Hermitian manifold with non-negative second Ricci curvature
and with torsion and $(2,0)$ part of the curvature bounded. Then
$M$ doesn't admit any non-constant bounded $J-$holomorphic function $f:M\to\mathbb{C}$.
\end{cor}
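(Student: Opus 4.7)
The plan is to deduce the corollary by applying Theorem \ref{main1} with a carefully chosen target almost-Hermitian metric. Since $f$ is bounded, there is some $R>0$ with $f(M)\subset D_R:=\{z\in\mathbb{C}:|z|<R\}$. On $D_R$ I use the Poincar\'e-type metric
\[ \ti{g} = \frac{4R^2}{(R^2-|z|^2)^2}\,|dz|^2, \]
which together with the standard complex structure $J_{\mathbb{C}}$ is Hermitian and has constant holomorphic bisectional curvature equal to a negative number $-K_2$ with $K_2>0$. The flat metric on $\mathbb{C}$ cannot be used directly here because its bisectional curvature is zero and not bounded above by a strictly negative constant; replacing the target by $(D_R,\ti{g})$ is precisely what makes the bisectional curvature hypothesis of Theorem \ref{main1} available, and it is the reason boundedness of $f$ is an essential assumption.

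The condition for $f$ to be almost-complex involves only the almost-complex structures and is independent of the choice of target metric, so $f$ continues to be an almost-complex map when the target is viewed as $(D_R,J_{\mathbb{C}},\ti{g})$. All hypotheses on the domain $(M,J,g)$ in Theorem \ref{main1} are satisfied by assumption, and the non-negativity of the second Ricci curvature means that the lower bound $-K_1$ can be taken with $K_1=0$.

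Applying Theorem \ref{main1} to $f:M\to(D_R,J_{\mathbb{C}},\ti{g})$ then yields either that $f$ is constant, or that $f^*\ti{g}\leq(K_1/K_2)g=0$. Since $\ti{g}$ is positive definite on $D_R$, the latter alternative forces $df\equiv 0$, and $f$ is constant in either case, contradicting the hypothesis. There is essentially no analytic obstacle in this reduction: the only point requiring thought is the observation that $K_1=0$ collapses the distance-decreasing estimate of Theorem \ref{main1} into a rigidity statement, once boundedness of $f$ has been exploited to replace the zero-curvature target $\mathbb{C}$ by a hyperbolically curved disc.
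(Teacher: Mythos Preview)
Your proof is correct and is precisely the intended argument: the paper states the corollary without proof, as it follows immediately from Theorem~\ref{main1} by exactly the device you describe, and the same trick of equipping a disc with the Poincar\'e metric and invoking Theorem~\ref{main1} appears explicitly in the paper's proof of Theorem~\ref{main3}.
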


Notice that when $M$ is compact the assumptions of bounded torsion and $(2,0)$ part of the curvature
are automatically satisfied. Also, while almost-complex maps between K\"ahler manifolds
are always harmonic \cite{Li}, this is no longer true for general almost-complex manifolds (see (9.11) in \cite{EL}), so
that the results of \cite{GH1} don't apply in our situation.\\

Next we assume that $M$ and $\ti{M}$ have the same dimension. A map $f:M\to \ti{M}$ is called \emph{non-degenerate} 
if $f^*dV_{\ti{g}}$ is a volume form on $M$, and \emph{totally degenerate} if $f^*dV_{\ti{g}}$ vanishes
identically. Then we have the following Schwarz Lemma for the volume forms:

\begin{theorem}\label{main2}
Let $(M^{2n},J,g)$ be a complete almost-Hermitian manifold with second Ricci curvature bounded from
 below, with torsion and $(2,0)$ part of the curvature bounded, and with scalar curvature
bounded from below by $-nK_1$.
Let $(\ti{M}^{2n},\ti{J},\ti{g})$ be an almost-Hermitian manifold of the same dimension $2n$ with first Ricci curvature bounded from above by $-K_2$, $K_2>0$.
If $f:M\to \ti{M}$ is a non-degenerate almost-complex map, then we must have $K_1\geq 0$ and
$$f^* dV_{\ti{g}} \leq \left(\frac{K_1}{K_2}\right)^n dV_g.$$ In particular if $K_1\leq 0$ then any 
almost-complex map is totally degenerate.
\end{theorem}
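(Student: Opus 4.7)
The plan is to run Yau's volume-element Schwarz lemma argument, now with the canonical Laplacian and the maximum principle established in Section 2. Let $u=f^*dV_{\ti g}/dV_g$, a non-negative function on $M$ which is positive on the open dense set where $f$ has maximal rank, and set $v=u^{1/n}$. The goal is to show $\sup_M v\le K_1/K_2$; the conclusion $K_1\ge 0$ then follows at once from non-degeneracy of $f$.

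First I would derive, at each point where $u>0$, a pointwise inequality of the form
$$\Delta\log v\ \ge\ K_2\,v-K_1,$$
where $\Delta$ is the canonical Laplacian of $g$. Writing $\log u=\log\det(f^*\ti g)-\log\det g$ and choosing coordinates at the point in which $g$ is the identity and $f^*\ti g$ is diagonal, the $\log\det g$ term produces $-\mathrm{scal}(g)$ up to terms quadratic in the torsion and involving the $(2,0)$ part of the curvature, all of which are controlled by hypothesis; the scalar curvature lower bound then gives the $-nK_1$ contribution. Splitting $\log\det(f^*\ti g)=\log|\det df|^2+\log\det\ti g\circ f$, the first summand is non-negative modulo torsion corrections, with the almost-complex condition on $f$ playing the role of holomorphicity up to Nijenhuis terms bounded by the hypothesis on $M$. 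The chain rule applied to the second summand yields $-\tr{g}{f^*\mathrm{Ric}^{(1)}\ti g}\ge K_2\tr{g}{f^*\ti g}$ by the first Ricci upper bound on $\ti M$. Finally the arithmetic-geometric mean inequality applied to the eigenvalues of $f^*\ti g$ with respect to $g$ gives $\tr{g}{f^*\ti g}\ge nv$, and dividing through by $n$ yields the displayed inequality.

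Next I would invoke the Omori-Yau maximum principle for the canonical Laplacian, whose validity on complete almost-Hermitian manifolds with second Ricci bounded below and with torsion and $(2,0)$ part of the curvature bounded is supplied by the Laplacian comparison theorem of Section 2. If $\sup_M v=\infty$, the principle produces a sequence $x_k$ with $v(x_k)\to\infty$ and $\Delta\log v(x_k)\le 1/k$, contradicting the differential inequality. Hence $\sup_M v$ is finite, and a second application, made rigorous by working with $\log(v+\ve)$ in a neighbourhood of the supremum and sending $\ve\to 0$ to accommodate points where $u=0$, yields $K_2\sup_M v\le K_1$, completing the proof.

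The main obstacle is the computation of $\Delta\log\det(f^*\ti g)$ in the almost-Hermitian setting: the non-integrability of $J$ and $\ti J$ generates commutator and Nijenhuis contributions which are absent in the Hermitian case, and one must check that each is either of the correct sign or is absorbed into the assumed bounds on torsion and on the $(2,0)$ part of the curvature. A secondary technical point is the $\ve$-regularization required to apply the maximum principle near points where $u$ vanishes, which is standard but needs to be spelt out.
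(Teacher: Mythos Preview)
Your strategy matches the paper's, but the execution diverges in one essential respect and one technical one.

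The essential point: the Laplacian identity for the volume ratio is \emph{clean}. Working in local unitary frames and writing $v=|\det(a^\alpha_i)|^2$, the paper quotes from \cite{GH2} (see also \cite{TWY}, Lemma~3.2) the formula
\[
\tfrac12\Delta v \;=\; vR \;-\; v\,\ti R_{\alpha\ov\beta}\,a^\alpha_i\,\ov{a^\beta_i},
\]
valid in the almost-Hermitian setting with no torsion or $(2,0)$-curvature correction terms whatsoever. The underlying reason is equation~\eqref{eqna2}: the covariant differential of $a^\alpha_i$ is purely of type $(1,0)$, so $\det(a^\alpha_i)$ behaves like a holomorphic section of the relevant line bundle, and only the line-bundle curvature survives. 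Your proposed decomposition ``$\log|\det df|^2+\log\det\ti g\circ f$, with the first summand non-negative modulo torsion corrections'' is coordinate-dependent and, more seriously, if those corrections were really present and merely ``controlled by hypothesis'' you would obtain only $v\le\bigl((K_1+C(A_2,A_3))/K_2\bigr)^n$, not the sharp bound stated in the theorem. So the obstacle you flag as the main one is in fact absent, but believing in it would block the stated conclusion.

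The technical point: you cannot apply the Omori--Yau principle (Proposition~\ref{maxprinc2}) directly to $\log v$ to rule out $\sup v=\infty$, since that principle requires a one-sided bound to begin with. The paper instead feeds the inequality $\tfrac12\Delta v\ge nK_2\,v^{1+1/n}-nK_1\,v$ (obtained from the displayed identity together with AM--GM in the form $\tr{g}{f^*\ti g}\ge n\,v^{1/n}$) into Theorem~\ref{maxprinc}, whose proof handles the possibly-unbounded case by applying the maximum principle to the bounded auxiliary function $(v+c)^{-1/(2n)}$. Your $\varepsilon$-regularization near $\{u=0\}$ then becomes unnecessary as well.
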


As an application of the Schwarz Lemma, we study the geometry of the product of two (nontrivial)
almost-complex manifolds. A classical theorem of Preissman implies that a compact Riemannian
manifold with negative sectional curvature cannot be topologically a product manifold. 
For K\"ahler manifolds the notion of bisectional curvature is more natural, and it's easy
to see that a compact K\"ahler manifold with negative bisectional curvature cannot be
the product of two nontrivial complex manifolds (this is because the negativity of
the curvature implies that the cotangent bundle is ample).
When the two factors are allowed to be noncompact, there are similar results due to Yang, Zheng
and Seshadri (\cite{Yn}, \cite{Zh}, \cite{Se}). In \cite{SZ} it is proved that the product of two
complex manifolds doesn't admit any complete Hermitian metric with bounded torsion and bisectional
curvature bounded between two negative constants. It is natural to expect that such a result should
hold in the almost-complex case, and this is precisely what we prove.

\begin{theorem}\label{main3}
Let $M=X\times Y$ be the product of two almost-complex manifolds of positive dimensions. Then $M$
doesn't admit any complete almost-Hermitian metric with torsion and $(2,0)$ part of the curvature 
bounded and
with bisectional curvature bounded between two negative constants.
\end{theorem}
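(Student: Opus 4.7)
I will argue by contradiction. Suppose a complete almost-Hermitian metric $g$ on $M=X\times Y$ exists with the stated properties, and write the curvature bound as $-K_2\le$ bisectional curvature $\le-K_2'$ for some $K_2\ge K_2'>0$. Tracing the lower bound in a unitary frame shows that the second Ricci curvature of $g$ is bounded below by $-nK_2$, so $(M,g)$ satisfies every source hypothesis of Theorem~\ref{main1}, while as a target it has bisectional curvature bounded above by $-K_2'<0$. The almost-complex structure on $M$ is by construction $J=J_X\oplus J_Y$; fixing base points $x_0\in X$ and $y_0\in Y$, the slice projections
\[
\pi_1:M\to M,\ (x,y)\mapsto(x,y_0),\qquad \pi_2:M\to M,\ (x,y)\mapsto(x_0,y)
\]
satisfy $d\pi_i\circ J=J\circ d\pi_i$ and so are almost-complex self-maps of $M$. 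Applying Theorem~\ref{main1} to $\pi_1$ and $\pi_2$ produces a uniform constant $C>0$ with $\pi_i^*g\le Cg$ for $i=1,2$.

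Next I decompose a tangent vector at $p=(x,y)$ along the product as $v=v_X+v_Y$ with $v_X$ horizontal and $v_Y$ vertical. Evaluating the estimate for $\pi_1$ on a purely horizontal $v=v_X$ gives $g_{(x,y_0)}(v_X,v_X)\le C\,g_{(x,y)}(v_X,v_X)$, and since the base point $y_0$ is arbitrary, swapping it with $y$ yields the reverse inequality with the same constant. The analogous computation with $\pi_2$ controls the vertical block. Consequently the horizontal block of $g$ is uniformly equivalent, as $y$ varies, to its restriction on the fixed slice $X\times\{y_0\}$, and similarly for the vertical block along $X$-directions; equivalently, on horizontal and vertical vectors separately $g$ is quasi-isometric to the product almost-Hermitian metric $g^\#:=g|_{X\times\{y_0\}}\oplus g|_{\{x_0\}\times Y}$.

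To conclude I exploit this near-product rigidity to contradict the strictly negative upper bound on the bisectional curvature of $g$. The canonical connection of a product almost-Hermitian metric splits as $\nabla^X\oplus\nabla^Y$, so the mixed bisectional curvature $R(v_X,\bar v_X,v_Y,\bar v_Y)$ of $g^\#$ with one horizontal and one vertical input vanishes identically, whereas the hypothesis on $g$ forces the same mixed quantity of $g$ to satisfy $R(v_X,\bar v_X,v_Y,\bar v_Y)\le -K_2'g(v_X,v_X)g(v_Y,v_Y)$ at every point. I will convert the zeroth-order control $g\sim g^\#$ into a pointwise defect estimate on this second-order quantity by a maximum-principle argument in the spirit of Yau's original Schwarz proof: applying the canonical-connection Laplacian to a scalar such as $\tr{g}{g^\#}$ (or, making use of Theorem~\ref{main2}, to the logarithm of the ratio of the volume forms) along a sequence of points approaching the supremum, the completeness of $g$ plus the bounded torsion and bounded $(2,0)$ part of the curvature should force the bisectional curvature contribution from the mixed direction to match that of $g^\#$ at a near-maximum, i.e.\ to be close to zero, contradicting $-K_2'<0$.

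The main obstacle I anticipate is exactly this last passage from $C^0$ quasi-isometry to a $C^2$-level curvature comparison. Running a maximum principle on a metric ratio in the almost-Hermitian setting is technically heavier than in the K\"ahler case because the canonical connection has nontrivial torsion and the curvature tensor lacks several symmetries; it is precisely here that the bounded-torsion and bounded-$(2,0)$-curvature hypotheses must enter as the quantitative input that keeps the extra terms arising from the torsion and from the failure of the first and second Bianchi identities under control.
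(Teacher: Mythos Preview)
Your setup through the $C^0$ quasi-isometry $g\sim g^{\#}$ is correct and overlaps with the paper's argument (the paper also applies Theorem~\ref{main1} to the projection $\pi_1:(M,g)\to(X,h)$ to obtain $\pi_1^*h\le C_3 g$). But the last step, which you yourself flag, is a genuine gap and not just a technical nuisance: a $C^0$ comparison between $g$ and the product metric $g^{\#}$ carries no curvature information whatsoever, and there is no mechanism by which the maximum principle on $\tr{g}{g^{\#}}$ or on $\log(dV_{g^{\#}}/dV_g)$ would force the \emph{mixed} bisectional curvature of $g$ to be near zero at a near-extremum. The Bochner-type identity behind Theorem~\ref{main1} produces, at best, inequalities involving the bisectional curvature of the \emph{target} paired with the full image of $df$, not a selective comparison of one block of $g$ against the corresponding block of $g^{\#}$. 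So as written the argument does not close.

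What the paper does instead is bypass the $C^0\Rightarrow C^2$ passage entirely by constructing a concrete bounded positive scalar to which the maximum principle applies directly. Fix $q\in Y$, a $J$-holomorphic immersed disc $F:D\to Y$ through $q$ with tangent field $V=F_*(\partial/\partial z)$, and a cutoff $\eta\in C^\infty_c(D)$; set
\[
f(x,y)=f(x)=\int_D \eta\, G_x^*g,\qquad G_x(z)=(x,F(z)).
\]
Theorem~\ref{main1} applied to $G_x:(D,g_0)\to(M,g)$ gives $f$ bounded above. The point is that differentiating $g_{j\ov{k}}V^j\ov{V^k}$ twice in a horizontal direction $e_i$ produces exactly the mixed curvature term $-g_{j\ov{k}}V^\ell\ov{V^k}R^j_{\ell i\ov{i}}$, and the \emph{lower} bound $B\ge -C_1$ (together with the Schwarz estimate $g_{i\ov{i}}(x,q)\le C_3 g_{i\ov{i}}(x,y)$ coming from $\pi_1$) yields $\tfrac{1}{2}\Delta f\ge n\alpha f$ for some $\alpha>0$. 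Proposition~\ref{maxprinc2} applied to $-f$ then forces $f\equiv 0$, a contradiction. The missing idea in your proposal is precisely this choice of auxiliary function: an integrated vertical metric coefficient whose horizontal Hessian isolates the mixed bisectional curvature.
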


\begin{cor}\label{cor1}
The product of two compact nontrivial almost-complex manifolds doesn't admit any
almost-Hermitian metric with negative bisectional curvature.
\end{cor}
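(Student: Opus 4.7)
The plan is to derive Corollary \ref{cor1} as an essentially immediate consequence of Theorem \ref{main3}, by showing that on a compact product $M = X \times Y$ the auxiliary boundedness hypotheses of Theorem \ref{main3} come for free as soon as the bisectional curvature is everywhere negative. I would argue by contradiction: suppose an almost-Hermitian metric $g$ on $M$ has negative bisectional curvature, and then verify the four hypotheses required by Theorem \ref{main3}.

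First, since $X$ and $Y$ are compact, so is $M$, and Hopf--Rinow gives completeness of $(M,g)$ at no cost. Second, the torsion of the canonical connection and the $(2,0)$ part of the curvature tensor are continuous tensor fields on $M$, so their pointwise norms with respect to $g$ attain finite maxima on the compact manifold and are in particular bounded. Third, the bisectional curvature may be viewed as a continuous function on the compact space of pairs of unit $(1,0)$ tangent vectors over $M$; by hypothesis this function is strictly negative everywhere, so it attains a finite maximum $-K_2 < 0$ and a finite minimum $-K_1 < 0$. Hence the bisectional curvature is pinched between two negative constants, which is precisely the last hypothesis of Theorem \ref{main3}. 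All assumptions are now in place, and Theorem \ref{main3} forbids the existence of $g$, giving the desired contradiction.

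The only point that requires any care is the third step: one must argue that negativity together with compactness yields a \emph{strictly} negative upper bound, rather than just an upper bound of $0$, and this is exactly where one invokes the continuity of the bisectional curvature on a compact space of $(1,0)$-vectors. Beyond that, there is no analytic content in the corollary --- all the real work has been absorbed into Theorem \ref{main3}.
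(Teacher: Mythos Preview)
Your argument is correct and matches the paper's approach exactly: the corollary is stated without a separate proof, since compactness immediately supplies completeness, boundedness of the torsion and the $(2,0)$-curvature, and pinching of the negative bisectional curvature between two negative constants, so Theorem \ref{main3} applies directly.
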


Let us stress that here the bisectional curvature is the one of the canonical
connection, and in general is different from the one of the Levi-Civita connection
(as defined in \cite{Gr} for example). Nevertheless, this curvature is more natural
on almost-Hermitian manifolds (see the discussion after Lemma \ref{comparelap}).\\

The proof of the Schwarz Lemma employs Cartan's formalism of moving frames
and the canonical connection, as in \cite{TWY}. To deal with the case of
noncompact manifolds, we generalize Yau's maximum principle \cite{Ya1}
to our situation. The proof of this requires a suitable Laplacian comparison
theorem for almost-Hermitian manifolds. This is the key technical tool
and is proved along the lines of the classical Laplacian comparison,
but using local holomorphic discs instead of complex coordinates
that are not available, and keeping carefully track of the torsion.
The proof of Theorem \ref{main3} follows the argument in \cite{SZ},
once the Schwarz Lemma and the maximum principle hold.
The paper is organized as 
follows: in section 2, we give some background on almost-Hermitian metrics and the canonical
connection. In section 3, we study the Laplacian of the canonical connection. In section 4, we
prove the Laplacian comparison theorem and the maximum principle. In section 5, we give
a proof of Theorems \ref{main1} and \ref{main2}. Finally, Theorem \ref{main3} is proved in section 6.

\bigskip
\noindent
{\bf Acknowledgements.} \ First of all I would like to thank my thesis advisor Professor S.-T. Yau
for suggesting this problem, sharing his ideas with me and for his constant support. 
I would also like to thank Ben Weinkove for many useful discussions and for reading an earlier version of this paper,
and Harish Seshadri for suggesting this generalization of his results.

\section{Almost-Hermitian manifolds and the canonical connection}
In this section we give some background on almost-Hermitian manifolds, the canonical
connection and its torsion and curvature. Some of the exposition follows \cite{TWY}, section 2.\\

Let $(M,J,g)$ be an almost-Hermitian manifold of dimension $2n$.  Namely, $J$ is an almost complex structure on $M$ and $g$ is a Riemannian metric satisfying
$$g(J X, JY) = g(X,Y),$$
for all tangent vectors $X$ and $Y$. Write $T^{\mathbb{R}}_p M$ 
for the (real) tangent space of $M$ at a point $p$.  In the following we will drop the subscript $p$. 
 Denote the complexified tangent space by $T^{\mathbb{C}}M = T^{\mathbb{R}} M \otimes \mathbb{C}$. 
 Extending $g$ and $J$ linearly to $T^{\mathbb{C}}M$, we see 
that the complexified tangent space can be decomposed as
$$T^{\mathbb{C}} M = T' M \oplus T''M,$$
where $T'M$ and $T''M$ are the eigenspaces of $J$ corresponding to eigenvalues $\sqrt{-1}$ and $- \sqrt{-1}$ respectively.  
$T'M$ and $T''M$ are complex vector spaces of dimension $n$, which inherit a Hermitian metric induced by $g$.  Note that by extending $J$ to forms, we can uniquely decompose $m$-forms into $(p,q)$-forms for each $p$,$q$ with $p+q=m$.
The real tangent bundle $T^\mathbb{R} M$ can be identified with $T'M$ in a natural way,
by sending a vector $X_{\mathbb{R}}$ to $X=\frac{1}{\sqrt{2}}(X_{\mathbb{R}}-\sqrt{-1}JX_{\mathbb{R}})$. This identification
is an isomorphism of complex vector bundles, and an isometry. From now on we'll write $g$ for the induced Hermitian metric on $T'M$,
and $dV_g$ for its corresponding volume element. Choose a local unitary frame $\{ e_1, \ldots, e_n \}$ for $T'M$, and let $\{ \theta^1, \ldots, \theta^n \}$ 
be a dual coframe.  Then we can write
$g =   \theta^i \otimes \ov{\theta^i}$ and $dV_g=(\sqrt{-1})^n\theta^1\wedge\ov{\theta^1}\wedge\dots\wedge\theta^n\wedge\ov{\theta^n}$,
where here, and henceforth, we are summing over repeated indices.

Let $\nabla$ be an affine connection on $T^{\mathbb{R}} M$, which we extend linearly to $T^{\mathbb{C}} M$.  We say that $\nabla$ is an \emph{almost-Hermitian connection} if
$$\nabla J = \nabla g = 0.$$
It is easy to see that such connections always exist on any almost-Hermitian manifold,
and from now on we shall assume that $\nabla$ satisfies this condition.  Observe that 
$J ( \nabla e_i) = \sqrt{-1} \nabla e_i,$ 
and hence $\nabla e_i \in T'M \otimes (T^{\mathbb{C}}(M))^*$.  Then locally there exists a matrix of complex valued 1-forms $\{ \theta^j_i \}$, called the \emph{connection 1-forms}, such that
$$\nabla e_i = \theta^j_i e_j.$$
Applying $\nabla$ to $g(e_i, \ov{e_j})$ and using the condition $\nabla g=0$ we see that $\{ \theta^j_i \}$ satisfies the skew-Hermitian property
$$\theta^j_i + \ov{\theta^i_j} = 0.$$
Now define the \emph{torsion} $\Theta = (\Theta^1, \ldots, \Theta^n)$ of $\nabla$ by
\begin{equation} \label{eqnstructure1}
d \theta^i = - \theta^i_j \wedge \theta^j + \Theta^i, \qquad \textrm{for } i=1, \ldots, n.
\end{equation}
 Notice that the $\Theta^i$ are 2-forms.  Equation (\ref{eqnstructure1}) is known as the \emph{first structure equation}.
Define the \emph{curvature} $\Omega = \{\Omega_j^i \}$   of $\nabla$ by
\begin{equation} \label{eqnstructure2}
d \theta^i_j = - \theta^i_k \wedge \theta^k_j + \Omega^i_j.
\end{equation}
Note that $\{ \Omega^i_j \}$ is a  skew-Hermitian matrix of 2-forms.  Equation (\ref{eqnstructure2}) is known as the \emph{second structure equation}. We have the following lemma (see e.g. \cite{Ga}).

\begin{lemma}
There exists a unique almost-Hermitian connection $\nabla$ on $(M,J,g)$ whose torsion $\Theta$ has everywhere vanishing $(1,1)$ part.
\end{lemma}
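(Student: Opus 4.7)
The plan is to work in a local unitary frame and show that requiring the $(1,1)$-part of the torsion to vanish reduces the problem to pointwise linear algebra with a unique solution.

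Fix a local unitary frame $\{e_i\}$ with dual coframe $\{\theta^i\}$. An almost-Hermitian connection is encoded in a matrix $\{\theta^j_i\}$ of complex-valued $1$-forms satisfying the skew-Hermitian condition $\theta^j_i + \overline{\theta^i_j} = 0$. I decompose each entry by type, writing $\theta^j_i = \alpha^j_i + \beta^j_i$ with $\alpha^j_i$ of type $(1,0)$ and $\beta^j_i$ of type $(0,1)$; since $\overline{\alpha^i_j}$ is of type $(0,1)$ and $\overline{\beta^i_j}$ of type $(1,0)$, the skew-Hermitian condition splits by type into the single relation $\alpha^j_i = -\overline{\beta^i_j}$, so the entire connection is determined by its $(0,1)$-part.

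Next I project the first structure equation $d\theta^i = -\theta^i_j \wedge \theta^j + \Theta^i$ onto $(1,1)$-forms. Because $\theta^j$ is of type $(1,0)$, the wedge $\alpha^i_j \wedge \theta^j$ is pure $(2,0)$ while $\beta^i_j \wedge \theta^j$ is pure $(1,1)$; requiring $(\Theta^i)^{1,1} = 0$ therefore gives
$$(d\theta^i)^{1,1} = -\beta^i_j \wedge \theta^j.$$
Writing $(d\theta^i)^{1,1} = T^i_{j\overline{k}}\,\theta^j \wedge \overline{\theta^k}$ and $\beta^i_j = B^i_{j\overline{k}}\,\overline{\theta^k}$, this collapses to the pointwise identity $B^i_{j\overline{k}} = T^i_{j\overline{k}}$, uniquely determining $\beta^i_j$, hence $\alpha^i_j$, hence the matrix $\{\theta^j_i\}$.

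For existence I take these formulas as a definition: the resulting matrix is skew-Hermitian and the associated connection has, by construction, no $(1,1)$-torsion (the remaining $(2,0)$- and $(0,2)$-components of $\Theta^i$ are then read off from $d\theta^i$ and the already-found $\alpha^i_j$). The final step is patching: on the overlap of two unitary frames both local constructions produce an almost-Hermitian connection with vanishing $(1,1)$-torsion, so they must agree by the uniqueness just proved, and the locally defined connections glue into a globally defined almost-Hermitian connection. The only real obstacle is the careful bookkeeping of the $(p,q)$-decomposition in the non-integrable setting, where $d$ does not split simply as $\partial + \overline{\partial}$; the argument itself is pure pointwise linear algebra, the match being perfect because the number of independent components of a $(1,1)$-valued torsion exactly equals that of a $(0,1)$-valued connection matrix.
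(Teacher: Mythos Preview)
The paper does not actually prove this lemma; it merely states it and cites Gauduchon \cite{Ga}. Your argument is correct and self-contained: the key observation that the skew-Hermitian condition forces $\alpha^j_i = -\overline{\beta^i_j}$, so that the connection is entirely encoded in its $(0,1)$-part, and that the vanishing of $(\Theta^i)^{1,1}$ then pins down $\beta^i_j$ via $(d\theta^i)^{1,1} = -\beta^i_j \wedge \theta^j$, is exactly the standard route (and is essentially what one finds in the cited reference). Your handling of the global patching via local uniqueness is also fine, and you are right to flag that non-integrability allows $(d\theta^i)$ to have a $(0,2)$-component, though this is harmless here since only the $(1,1)$-projection enters the determining equation.
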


We call such a connection the \emph{canonical connection}.
In Riemannian geometry the torsion of a connection $\nabla$ is usually defined by 
\begin{equation}\label{need1}
\nabla_X Y=\nabla_Y X+[X,Y]+\tau(X,Y).
\end{equation}
We'll show in Lemma \ref{comparelap} that in $T^\mathbb{C}M$ the following identity holds:
\begin{equation}\label{need2}
\tau=2(\Theta^i e_i+\ov{\Theta^j e_j}).
\end{equation}
Define functions $M^i_{jk}$ and  $N^i_{\ov{j} \, \ov{k}}$ by
\begin{eqnarray*}
(\Theta^i)^{(2,0)}  =  M^i_{jk} \theta^j \wedge \theta^k, \quad (\Theta^i)^{(0,2)}  =  N^i_{\ov{j} \, \ov{k}} \ov{\theta^j} \wedge \ov{\theta^k},
\end{eqnarray*}
with $M^i_{jk} = - M^i_{kj}$ and $N^i_{\ov{j} \, \ov{k}} = - N^i_{\ov{k} \, \ov{j}}$.
Define $R_{ik \ov{\ell}}^j$,  $K^i_{jk\ell}$ and $K^i_{j \ov{k} \, \ov{\ell}}$ by
\begin{eqnarray*}
(\Omega_i^j)^{(1,1)} & = & R_{ik \ov{\ell}}^j \theta^k \wedge \ov{\theta^\ell} \\
(\Omega^i_j)^{(2,0)} & = & K^i_{jk\ell} \theta^k \wedge \theta^\ell\\
(\Omega^i_j)^{(0,2)} & = & K^i_{j \ov{k} \, \ov{\ell}} \ov{\theta^k} \wedge \ov{\theta^\ell},
\end{eqnarray*}
with $K^i_{jk\ell} = - K^i_{j\ell k}$ and  $K^i_{j\ov{k} \, \ov{\ell}} = - K^i_{j\ov{\ell} \, \ov{k}}$.
The fact that $\{ \Omega^i_j\}$ is skew-Hermitian implies that
\begin{equation}\label{switch}
K^i_{jk\ell}=\ov{K^j_{i\ov{\ell}\,\ov{k}}},\quad R^i_{jk\ov{\ell}}=\ov{R^j_{i\ell\ov{k}}}.
\end{equation}
If $X,Y$ are two $(1,0)$ vectors, we define 
\begin{equation}\label{bisect}
B(X,Y)=\frac{R^j_{ik\ov{\ell}}X^i \ov{X^j} Y^k \ov{Y^\ell}}{\|X\|^2\|Y\|^2}
\end{equation}
 to be the 
\emph{bisectional curvature} of the canonical connection in the directions $X$ and $Y$, which is a real number.
We define the \emph{first Ricci curvature}, the \emph{second Ricci curvature} and the \emph{scalar curvature} of the canonical connection to be the tensors $R_{k\ov{\ell}} = R^i_{ik\ov{\ell}}$, $R'_{k\ov{\ell}}=R^\ell_{ki\ov{i}}$ and $R = R_{k \ov{k}}=R'_{k\ov{k}}$ respectively.
Applying the exterior derivative to the first structure equation, we obtain the \emph{first Bianchi identity},
\begin{equation} \label{eqnfirstBianchi}
d \Theta^i = \Omega^i_j \wedge \theta^j - \theta^i_j \wedge \Theta^j.
\end{equation}
If we define
$M^i_{jk,p}$, $M^i_{jk,\ov{p}}$ by
\begin{equation} \label{derivativeT}
dM^i_{jk} + \theta^i_p M^p_{jk} - M^i_{pk} \theta^p_j - M^i_{jp} \theta^p_k 
 =  M^i_{jk, p} \theta^p + M^i_{jk, \ov{p}} \ov{\theta^p},
\end{equation}
and $N^i_{\ov{j} \, \ov{k}, p}$ and $N^i_{\ov{j} \, \ov{k}, \ov{p}} $ by
\begin{equation}\label{derivativeN}
dN^i_{\ov{j} \, \ov{k}} + \theta^i_p N^p_{\ov{j} \, \ov{k}} - N^i_{\ov{p} \, \ov{k}} \ov{\theta^p_j} - N^i_{\ov{j} \ov{p}} \ov{\theta^p_k}  =  N^i_{\ov{j} \, \ov{k}, p} \theta^p + N^i_{\ov{j} \, \ov{k}, \ov{p}} \ov{\theta^p},
\end{equation}
then the first Bianchi identity implies that (see e.g. \cite{TWY}, section 2.3)
\begin{equation}\label{bianchi}
2M^i_{pj} N^p_{\ov{k} \, \ov{\ell}} + N^i_{\ov{k} \,  \ov{\ell},j}  =  K^i_{j \ov{k} \, \ov{\ell}}.
\end{equation}

We say that the bisectional curvature is bounded
above by $A$ if $$B(X,Y)\leq A$$ holds for all $X,Y\in T'M$.
The first Ricci curvature is
bounded below by a constant $-A_1$ if $$R_{k\ov{\ell}}X^k\ov{X^\ell}\geq -A_1\|X\|^2$$ holds for all $X\in T'M$, and the same
for the second Ricci curvature. The torsion is bounded by $A_2>0$ if 
$$\|\tau(X,Y)\|\leq A_2 \|X\| \|Y\|$$ holds for all $X,Y\in T'M$, and the $(2,0)$ part of the curvature
is bounded by $A_3$ if $$|K^i_{jk\ell}\ov{X^i}Y^j Y^k X^\ell|\leq A_3\|X\|^2\|Y\|^2.$$
\section{The canonical Laplacian}
In this section we study the Laplacian of the canonical connection, and relate it
to the standard Laplacian of the Levi-Civita connection. Again, part of the exposition follows
\cite{TWY}.\\

Let $\nabla$ be the canonical connection of $(M, J, g)$, and $u$ be a function on $M$.  We define the \emph{canonical Laplacian} $\Delta$ of $u$ by
$$\Delta u  = \sum_i \left( (\nabla \nabla u) (e_i, \ov{e_i}) + (\nabla \nabla u) (\ov{e_i}, e_i) \right).$$
This expression is independent of the choice of unitary frame.  Another way to define $\Delta u$ is as follows.  Let $\{ \nu_1, \ldots, \nu_{2n} \}$ be a real local orthonormal frame for $g$ and set
$$\Delta u = \sum_{A=1}^{2n} (\nabla \nabla u)(\nu_A, \nu_A).$$
Clearly this expression is independent of the choice of frame and coincides with the one above.
Now define $u_i$ and $u_{\ov{i}}$ by
\begin{equation} \label{eqndu}
du = u_i \theta^i + u_{\ov{i}} \ov{\theta^i}.
\end{equation}
Writing $\partial u$ and $\ov{\partial} u$ for the $(1,0)$ and $(0,1)$ parts of $du$ respectively we see 
that $\partial u = u_i \theta^i$ and $\ov{\partial} u = u_{\ov{i}} \ov{\theta^i}$.
Define $u_{ik}$, $u_{i\ov{k}}$, $u_{\ov{i} k}$ and $u_{\ov{i} \, \ov{k}}$ by
\begin{align*}
& du_i - u_j \theta_i^j = u_{ik} \theta^k + u_{i \ov{k}} \ov{\theta^k} \\
& du_{\ov{i}} - u_{\ov{j}} \ov{\theta_i^j} = u_{\ov{i} k} \theta^k + u_{\ov{i} \, \ov{k}} \ov{\theta^k}.
\end{align*}
The following lemma is proved in \cite{TWY}.  
\begin{lemma} \label{lemmalap}
\begin{eqnarray} \label{eqnlemma1}
\Delta u  &=&2\sum_i u_{i\ov{i}}\\
& = & - 2 \sum_{i}  (d \partial u)^{(1,1)}(e_i, \ov{e_i}) \\ \label{eqnlemma2}
& = & 2 \sum_{i}  (d \ov{\partial} u)^{(1,1)}(e_i, \ov{e_i}) \\ \label{eqnlemma3}
& = &  \sqrt{-1} \sum_{i}  (d (J d u))^{(1,1)}(e_i, \ov{e_i}),
\end{eqnarray}
where $J$ acts on a 1-form $\alpha$ by $(J\alpha)(X) = \alpha (J (X))$ for a vector $X$.
\end{lemma}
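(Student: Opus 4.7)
The plan is to reduce everything to a direct manipulation of the first structure equation $d\theta^i=-\theta^i_j\wedge\theta^j+\Theta^i$, using the defining property of the canonical connection, namely that $(\Theta^i)^{(1,1)}=0$. The four formulas (\ref{eqnlemma1})--(\ref{eqnlemma3}) will follow once I show that the Hessian components $u_{i\bar{j}}$ and $u_{\bar{j}i}$ agree, and then compute each of $\partial u$, $\bar\partial u$ and $Jdu$ through the exterior derivative.

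First I would unwind the definitions: from $(\nabla\nabla u)(X,Y)=XY(u)-(\nabla_X Y)u$ one checks that $(\nabla\nabla u)(\bar{e_k},e_i)=u_{i\bar{k}}$ and $(\nabla\nabla u)(e_i,\bar{e_k})=u_{\bar{k}i}$, so by the real-frame definition, $\Delta u=\sum_i(u_{i\bar{i}}+u_{\bar{i}i})$. To upgrade this to $2\sum_i u_{i\bar{i}}$, I would apply $d^2=0$ to $du=u_i\theta^i+u_{\bar i}\bar\theta^i$ and extract the $(1,1)$ part; the connection terms cancel via skew-Hermiticity, leaving
$$u_{i\bar{j}}\bar\theta^j\wedge\theta^i+u_{\bar{i}j}\theta^j\wedge\bar\theta^i+u_i(\Theta^i)^{(1,1)}+u_{\bar{i}}\overline{(\Theta^i)^{(1,1)}}=0.$$
Here is where the canonical connection enters: since $(\Theta^i)^{(1,1)}=0$, the identity collapses to $u_{\bar{i}j}=u_{j\bar{i}}$, giving (\ref{eqnlemma1}).

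Next I would compute $d(\partial u)$ directly from the first structure equation. Writing $\partial u=u_i\theta^i$ and expanding $du_i-u_j\theta^j_i=u_{ij}\theta^j+u_{i\bar{j}}\bar\theta^j$, the connection terms again cancel, leaving
$$d(\partial u)=u_{ij}\theta^j\wedge\theta^i+u_{i\bar{j}}\bar\theta^j\wedge\theta^i+u_i\Theta^i.$$
The $(1,1)$ part, using $(\Theta^i)^{(1,1)}=0$, is $-u_{i\bar{j}}\theta^i\wedge\bar\theta^j$, which evaluated at $(e_k,\bar e_k)$ gives $-u_{k\bar k}$; summing yields (\ref{eqnlemma2}). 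The same calculation applied to $\bar\partial u=u_{\bar i}\bar\theta^i$ (or using complex conjugation in the real case) yields the third formula.

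Finally, for (\ref{eqnlemma3}) I would use that $J$ acts as multiplication by $\sqrt{-1}$ on $(1,0)$-forms and by $-\sqrt{-1}$ on $(0,1)$-forms, so $Jdu=\sqrt{-1}(\partial u-\bar\partial u)$; taking $d$ and projecting onto $(1,1)$ gives $\sqrt{-1}\bigl((d\partial u)^{(1,1)}-(d\bar\partial u)^{(1,1)}\bigr)$, and combining with the two previous formulas produces the claimed identity. The only nontrivial step is the vanishing of the $(1,1)$-torsion contributions, which is precisely the defining feature of the canonical connection; everything else is bookkeeping with the structure equation.
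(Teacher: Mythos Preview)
Your argument is correct. The paper itself does not prove this lemma but simply cites \cite{TWY}; your proposal supplies precisely the kind of direct verification one finds there, hinging on the same two ingredients---the first structure equation and the vanishing of $(\Theta^i)^{(1,1)}$---to obtain the symmetry $u_{\bar{j}i}=u_{i\bar{j}}$ and the explicit $(1,1)$-parts of $d\partial u$, $d\bar\partial u$, and $d(Jdu)$.
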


We now want to relate the canonical Laplacian to the standard Levi-Civita Laplacian. In general they
are different, and their precise relation is given by the
\begin{lemma}\label{comparelap}
The Laplacian of the Levi-Civita connection of $g$ acting on a function $u$
is equal to
$$\Delta u + 2M^i_{ji}u_{\ov{j}}+2\ov{M^i_{ji}}u_j.$$
\end{lemma}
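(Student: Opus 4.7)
The plan is to compute $\Delta^{LC} u - \Delta u$ by comparing the two operators through their common difference tensor. Set $D(X,Y) := \nabla_X Y - \nabla^{LC}_X Y$. For any affine connection $\nabla'$ and any function $u$ one has $(\nabla'\nabla' u)(X,Y) = X(Yu) - (\nabla'_X Y)(u)$; tracing this identity over a real orthonormal frame $\{\nu_A\}_{A=1}^{2n}$ and subtracting the two expressions immediately yields
\[
\Delta^{LC} u - \Delta u \;=\; \sum_{A=1}^{2n} du\bigl(D(\nu_A,\nu_A)\bigr).
\]
Since $\nabla^{LC}$ is torsion-free and $\nabla$ has torsion $\tau$ we have $D(X,Y)-D(Y,X)=\tau(X,Y)$; since both connections are metric-compatible, $g(D(X,Y),Z)+g(Y,D(X,Z))=0$. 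A standard Koszul-type cyclic manipulation then gives
\[
2\,g\bigl(D(X,Y),Z\bigr) \;=\; g(\tau(X,Y),Z) - g(\tau(Y,Z),X) + g(\tau(Z,X),Y).
\]
Along the way I would also verify the identity (\ref{need2}), $\tau = 2(\Theta^i e_i + \ov{\Theta^j e_j})$, by applying $\theta^i$ to $\tau(X,Y)=\nabla_X Y - \nabla_Y X - [X,Y]$ and using $\nabla_X\theta^i = -\theta^i_j(X)\theta^j$, Cartan's formula for $d\theta^i$, and the first structure equation.

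Next I would pass from the real-frame trace to a unitary-frame trace via $X_i = \tfrac{1}{\sqrt{2}}(e_i + \ov{e_i})$, $JX_i = \tfrac{\sqrt{-1}}{\sqrt{2}}(e_i - \ov{e_i})$. A direct expansion of the symmetric two-tensor $\sum_A \nu_A\otimes\nu_A = \sum_i(e_i\otimes\ov{e_i} + \ov{e_i}\otimes e_i)$ and the complex-linear extension of $D$ give
\[
\sum_{A=1}^{2n} D(\nu_A,\nu_A) \;=\; \sum_i \bigl(D(e_i,\ov{e_i}) + D(\ov{e_i},e_i)\bigr).
\]
By the defining property of the canonical connection the $(1,1)$ part of $\tau$ vanishes, so $\tau(e_i,\ov{e_i})=0$ and hence $D(e_i,\ov{e_i}) = D(\ov{e_i},e_i)$. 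The sum therefore collapses to $V := 2\sum_i D(e_i,\ov{e_i})$.

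The final step is to read off the components of $V$ in the unitary frame by applying the Koszul formula with $X=e_i$, $Y=\ov{e_i}$, and $Z \in \{e_k,\ov{e_k}\}$. Using $\tau(e_p,\ov{e_q})=0$ together with $\tau(e_p,e_q) = 2M^i_{pq}e_i + 2\ov{N^j_{\ov{p}\,\ov{q}}}\,\ov{e_j}$ and its conjugate, the $(0,2)$ coefficients $N^i_{\ov{j}\,\ov{k}}$ produce vectors in $T'M$ that get paired against $e_i$ via $g(e_j,e_i)=0$, so they cancel out; only the $(2,0)$ coefficients $M^i_{jk}$ survive the trace. A short calculation then yields $g(V,\ov{e_k}) = 2\ov{M^i_{ki}}$ and $g(V,e_k)= 2M^i_{ki}$, so that
\[
du(V) \;=\; u_k\, g(V,\ov{e_k}) + u_{\ov{k}}\, g(V,e_k) \;=\; 2\ov{M^i_{ji}}u_j + 2M^i_{ji}u_{\ov{j}},
\]
which is the claimed formula. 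The main obstacle is really just careful bookkeeping: keeping the wedge-product convention consistent so that the factor of $2$ in (\ref{need2}) propagates correctly, and verifying the cancellation of the $N^i_{\ov{j}\,\ov{k}}$ terms that makes the torsion correction to the Laplacian depend only on $M^i_{jk}$.
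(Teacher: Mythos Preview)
Your argument is correct and complete; the Koszul identity for the difference tensor $D=\nabla-\nabla^{LC}$ together with $\tau(e_i,\ov{e_i})=0$ and $g(e_i,e_j)=0$ does make the $N^i_{\ov{j}\,\ov{k}}$ contributions drop out exactly as you say.

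The route, however, is genuinely different from the paper's. The paper does not introduce the difference tensor $D$ or its Koszul formula at all. Instead it quotes the identity (from \cite{KN}) that for any metric connection $\nabla$ with torsion $\tau$ the Levi--Civita Laplacian of $u$ is the $g$-trace of $X\mapsto \nabla_X(\grad_g u)+\tau(\grad_g u,X)$; the first term traces to $\Delta u$, and the second is evaluated directly once one knows $\tau$ in the unitary frame. The bulk of the paper's proof is therefore devoted to establishing \eqref{need2} by a case-by-case computation of $\theta^i(\nabla_X Y-\nabla_Y X-[X,Y])$ and $\ov{\theta^i}(\cdots)$ for $X,Y$ of types $(1,0)$ and $(0,1)$, using Cartan's formula for $d\theta^i$. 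Your approach packages the same information more structurally: by deriving $2g(D(X,Y),Z)=g(\tau(X,Y),Z)-g(\tau(Y,Z),X)+g(\tau(Z,X),Y)$ you make transparent \emph{why} only the $(2,0)$ part $M^i_{jk}$ of the torsion survives the trace (the $(0,2)$ part lands in $T'M$ and is killed by $g(e_j,e_i)=0$), whereas in the paper this cancellation happens implicitly inside $\tau(\grad_g u,\,\cdot\,)$. The paper's route is shorter once the Kobayashi--Nomizu formula is granted; yours is more self-contained and gives a cleaner explanation of the final expression.
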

\begin{proof}
The Laplacian of the Levi-Civita connection applied to $u$ is given by the trace
of the map $F:T^\mathbb{R} M\to T^\mathbb{R} M$ defined by
$$F(X)=\nabla_X (\grad_g u)+\tau(\grad_g u,X),$$
(see for example \cite{KN} p.282) where $\nabla$ is the canonical connection and $\tau$ is its torsion,
as defined in \eqref{need1}.
To prove the lemma it's enough to show that \eqref{need2} holds. 
We verify this for $X,Y\in T'M$ first. Define functions $X^i_p$, $X^i_{\ov{p}}$,
$X^i_{pq}$, $X^i_{p\ov{q}}$, $X^i_{\ov{p}q}$, $X^i_{\ov{p}\,\ov{q}}$ by
\begin{equation}\label{covariant}
dX^i+ X^j\theta^i_j=X^i_p\theta^p+X^i_{\ov{p}}\ov{\theta^p},
\end{equation}
\begin{equation}\label{covariant3}
dX^i_p+ X^j_p\theta^i_j-X^i_j \theta^j_p=X^i_{pq}\theta^q+X^i_{p\ov{q}}\ov{\theta^q},
\end{equation}
\begin{equation}\label{covariant4}
dX^i_{\ov{p}}+ X^j_{\ov{p}}\theta^i_j-X^i_{\ov{j}} \ov{\theta^j_p}=X^i_{\ov{p}q}\theta^q+X^i_{\ov{p}\,\ov{q}}\ov{\theta^q},
\end{equation}

and similarly for $Y$. Then we have
\begin{equation}\label{covariant2}
\begin{split}
\nabla_X Y&=X^p\nabla_{e_p}(Y^i e_i)=X^p \langle e_p, d Y^i\rangle e_i + X^p Y^i \langle\theta^k_i,e_p\rangle e_k\\
&=-X^p Y^k\langle\theta^i_k,e_p\rangle e_i + X^p Y^i_p e_i+ X^p Y^i \langle\theta^k_i,e_p\rangle e_k=X^p Y^i_p e_i.
\end{split}
\end{equation}
Here and in the following $\langle\cdot,\cdot\rangle$ denotes the pairing between vectors and $1$-forms.
Moreover
\begin{equation*}
\begin{split}
&-\langle \theta^i_\ell,X\rangle\langle\theta^\ell,Y\rangle+\langle \theta^i_\ell,Y\rangle\langle\theta^\ell,X\rangle
+2\Theta^i(X,Y)=2d\theta^i(X,Y)\\
&=X\langle\theta^i,Y\rangle-Y\langle\theta^i,X\rangle-\langle\theta^i,[X,Y]\rangle\\
&=X^j \langle e_j, d Y^i\rangle-Y^k \langle e_k,d X^i\rangle-\langle\theta^i,[X,Y]\rangle\\
&=- X^j Y^k \langle\theta^i_k,e_j\rangle+Y^k X^j\langle\theta^i_j,e_k\rangle+X^j Y^i_j -Y^kX^i_k -\langle\theta^i,[X,Y]\rangle\\
&=-\langle \theta^i_k,X\rangle\langle\theta^k,Y\rangle+\langle \theta^i_j,Y\rangle\langle\theta^j,X\rangle
+\langle\theta^i,\nabla_X Y-\nabla_Y X-[X,Y]\rangle,
\end{split}
\end{equation*}
which shows that the $e_i$ component of $\tau$ is $2\Theta^i$. Similarly
$$2\ov{\Theta^i}(X,Y)=2d\ov{\theta^i}(X,Y)=-\langle\ov{\theta^i},[X,Y]\rangle=\langle\ov{\theta^i},
\nabla_X Y-\nabla_Y X-[X,Y]\rangle,$$
so that the $\ov{e_i}$ component of $\tau$ is $2\ov{\Theta^i}.$

Now we take $X\in T'M$, $Y\in T''M$ (the case when $X,Y\in T''M$ is the same as the one above).
Then 
\begin{equation}\label{need3}
\begin{split}
\nabla_X Y&=X^p\nabla_{e_p}(\ov{Y^i e_i})=X^p \langle e_p, d \ov{Y^i}\rangle \ov{e_i} + X^p \ov{Y^i}\langle
\ov{\theta^k_i},e_p\rangle \ov{e_k}\\
&=-X^p \ov{Y^k}\langle\ov{\theta^i_k},e_p\rangle \ov{e_i} + X^p \ov{Y^i_{\ov{p}} e_i}+ X^p \ov{Y^i}\langle
\ov{\theta^k_i},e_p\rangle \ov{e_k}=X^p \ov{Y^i_{\ov{p}} e_i},
\end{split}
\end{equation}
and similarly $\nabla_Y X=\ov{Y^p} X^i_{\ov{p}} e_i$. Then
\begin{equation*}
\begin{split}
&\langle \theta^i_\ell,Y\rangle\langle\theta^\ell,X\rangle
+2\Theta^i(X,Y)=2d\theta^i(X,Y)=-Y\langle\theta^i,X\rangle-\langle\theta^i,[X,Y]\rangle\\
&=-\ov{Y^k} \langle \ov{e_k},d X^i\rangle-\langle\theta^i,[X,Y]\rangle
=\ov{Y^k} X^j\langle\theta^i_j,\ov{e_k}\rangle-\ov{Y^k}X^i_{\ov{k}} -\langle\theta^i,[X,Y]\rangle\\
&=\langle \theta^i_j,Y\rangle\langle\theta^j,X\rangle
+\langle\theta^i,\nabla_X Y-\nabla_Y X-[X,Y]\rangle,
\end{split}
\end{equation*}
which shows again that the $e_i$ component of $\tau$ is $2\Theta^i$, and the verification of the $\ov{e_i}$ component is analogous.
\end{proof}

A corollary of this is the following observation: if $u$ achieves its infimum at a point $x\in M$, then
$\Delta u(x)\geq 0$. We'll use this remark later.\\

Along the same lines as in Lemma \ref{comparelap}, it's easy to verify that the bisectional curvature
satisfies
$$\frac{1}{2}B(X,Y)\|X\|^2\|Y\|^2= R(V,JV,JW,W),$$
where $R$ is the (real) Riemann curvature tensor of the canonical connection, and
$V=\frac{1}{\sqrt{2}}(X+\ov{X})$, $W=\frac{1}{\sqrt{2}}(Y+\ov{Y})$ are two real tangent
vectors. This quantity is in general different from 
$$R^{LC}(V,JV,JW,W),$$
where $R^{LC}$ is the curvature of the Levi-Civita connection. This is usually
referred to as the holomorphic bisectional curvature \cite{Gr}, but is not very natural
on a general almost-Hermitian manifolds. In fact, it is not hard to see (\cite{Ko}) that 
the bisectional curvature of the canonical connection of an almost-complex submanifold
is always less than the one of the ambient space, but this fails in general for the
Levi-Civita connection (see Proposition 10.1 in \cite{Gr}). The two quantities obviously
agree on a K\"ahler manifold.\\

Let $(M, J, g)$ and $(\ti{M}, \ti{J}, \ti{g})$ be two almost-Hermitian manifolds of dimensions $2n$ and $2\ti{n}$ respectively and let $f : M \rightarrow \ti{M}$ be an \emph{almost-complex mapping},  which means that
$$ \ti{J} \circ f_* = f_* \circ J.$$

We'll also say that $f$ is $(J,\ti{J})$-\emph{holomorphic}.
Then we have the following invariance property.
\begin{lemma}\label{lemmalap2}
For any function $u$ on $\ti{M}$ we have
$$f^* d(\ti{J}du)=d(Jd(u\circ f)).$$
\end{lemma}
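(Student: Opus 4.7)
The plan is to prove the identity by a direct pointwise computation, using only two inputs: the naturality of the exterior derivative, $d\circ f^* = f^*\circ d$, and the defining almost-complex property $\tilde{J}\circ f_* = f_*\circ J$. Since applying $d$ to both sides of the desired equality reduces it, via $f^*d = d f^*$, to the 1-form identity
\[
f^*(\tilde{J}du) \;=\; Jd(u\circ f),
\]
the main content is to verify this auxiliary identity.

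To check it, I would evaluate both sides on an arbitrary real tangent vector $X$ on $M$. Unwinding the definitions of $\tilde{J}$ acting on 1-forms, the pullback, the almost-complex property, and the chain rule $f^*du = d(u\circ f)$, one computes
\begin{align*}
\bigl(f^*(\tilde{J}du)\bigr)(X) &= (\tilde{J}du)(f_*X) = du(\tilde{J}f_*X) = du(f_*JX) \\
&= (f^*du)(JX) = d(u\circ f)(JX) = \bigl(Jd(u\circ f)\bigr)(X).
\end{align*}
This establishes the auxiliary identity at the level of 1-forms.

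Applying the exterior derivative to both sides and commuting it past the pullback then yields
\[
f^*d(\tilde{J}du) \;=\; d\bigl(f^*(\tilde{J}du)\bigr) \;=\; d\bigl(Jd(u\circ f)\bigr),
\]
which is the desired equality. There is no real obstacle here—the computation is entirely formal, and the only place where the almost-complex hypothesis on $f$ is invoked is in the single step $du(\tilde{J}f_*X)=du(f_*JX)$. All other manipulations are valid for an arbitrary smooth map between smooth manifolds.
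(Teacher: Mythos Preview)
Your proof is correct and essentially identical to the paper's own proof: both first verify the 1-form identity $f^*(\ti{J}du)=Jd(u\circ f)$ by evaluating on an arbitrary tangent vector and using $\ti{J}\circ f_*=f_*\circ J$, then apply $d$ and the naturality $d\circ f^*=f^*\circ d$.
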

\begin{proof}
If $X$ is vector tangent to $M$ then 
\begin{equation*}
\begin{split}
\langle f^*\ti{J}du,X\rangle&=\langle\ti{J}du,f_* X\rangle=\langle du,(\ti{J}\circ f_*)(X)\rangle
=\langle du, f_* JX\rangle\\
&=\langle d(u\circ f),JX\rangle=\langle Jd(u\circ f),X\rangle,
\end{split}
\end{equation*}
and taking the exterior derivative we get the conclusion.
\end{proof}

\section{The Maximum Principle}
In this section we prove a version of Yau's generalized maximum principle \cite{Ya1} for
almost-Hermitian manifolds. The key tool is a Laplacian comparison theorem,
whose analogue in Riemannian and K\"ahler geometry is standard \cite{SY}. 
It was extended to Hermitian manifolds in \cite{CY} and we'll show that it still holds
in our more general setting.\\

The first result is as follows:

\begin{theorem}\label{maxprinc} Let $(M,J,g)$ be a complete almost-Hermitian manifold 
with second Ricci curvature bounded below and with torsion and $(2,0)$ part of the curvature bounded.
Let $u$ be a nonnegative function that is not identically zero and satisfies
\begin{equation}
\Delta u \geq A u^{1+\alpha} - B u,
\end{equation}
where $\alpha, A>0$. Then $u$ is bounded above, $B\geq 0$, and
$$\sup_M u\leq\left(\frac{B}{A}\right)^{\frac{1}{\alpha}}.$$
\end{theorem}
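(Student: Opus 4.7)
The strategy follows Yau's original approach to his generalized maximum principle, adapted to the canonical Laplacian $\Delta$ by means of the Laplacian comparison theorem developed in this section. The argument has two parts: (i) assuming $u$ is bounded above, a point-picking argument yields the sharp bound $(B/A)^{1/\alpha}$; and (ii) a cutoff argument, also based on the Laplacian comparison, rules out the unbounded case.

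For (i), suppose $M_0 := \sup_M u < \infty$; since $u\geq 0$ and $u\not\equiv 0$, $M_0>0$. The goal is to produce a sequence $x_k \in M$ with $u(x_k)\to M_0$ and $\limsup_k \Delta u(x_k)\leq 0$. Fix a basepoint $p_0$ with distance function $r$, and for each $\ve>0$ consider $w_\ve := u - \ve\sqrt{1+r^2}$. Completeness, boundedness of $u$, and $r(x)\to\infty$ along every divergent sequence force $w_\ve$ to attain its supremum at some interior point $x_\ve$. At $x_\ve$, $\nabla w_\ve=0$ yields $|\nabla u|(x_\ve)\leq \ve$, and $\Delta w_\ve(x_\ve)\leq 0$ combined with the Laplacian comparison bound (which I assume has the form $\Delta r \leq C(1+r)$ from the preceding section) gives $\Delta u(x_\ve)\leq \ve\,\Delta\sqrt{1+r^2}(x_\ve) = O(\ve)$. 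Along a subsequence $u(x_\ve)\to M_0$. Inserting into $\Delta u \geq A u^{1+\alpha}-Bu$ and letting $\ve\to 0$ produces $0\geq A M_0^{1+\alpha} - B M_0$; dividing by $M_0>0$ gives $B\geq A M_0^{\alpha}>0$ and $M_0\leq (B/A)^{1/\alpha}$. Non-smoothness of $r$ at the cut locus of $p_0$ is handled by Calabi's perturbation trick, which is Riemannian in nature and transfers to our setting without modification.

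For (ii), suppose toward a contradiction that $\sup_M u=\infty$. For each large $R$ choose a smooth cutoff $\rho_R$ supported in $B(p_0,2R)$, with $\rho_R\equiv 1$ on $B(p_0,R)$, $0\leq\rho_R\leq 1$, $|\nabla\rho_R|^2\leq C_1/R^2$, and $\Delta\rho_R\geq -C_2$ uniformly in $R$ (the last bound uses the Laplacian comparison to control $\Delta r$). Then $\rho_R^{k}u$, with $k$ chosen large enough relative to $\alpha$, attains its positive maximum at an interior point $x_R$. Expanding $\nabla(\rho_R^{k}u)(x_R)=0$ and $\Delta(\rho_R^{k}u)(x_R)\leq 0$, substituting $\rho_R\nabla u = -ku\nabla\rho_R$, and invoking the hypothesis on $\Delta u$ produces an estimate of the shape
\[
A\,\rho_R^{k}(x_R)\,u(x_R)^{\alpha} \;\leq\; B + C_3 k(k+1)/R^2 + C_4 k,
\]
with the right-hand side bounded independently of $R$. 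Rearranging yields an $R$-independent upper bound on $\sup_{B(p_0,R)} u$, contradicting unboundedness.

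The main obstacle — and the only substantive content beyond the classical Riemannian case — is the Laplacian comparison estimate for $\Delta r$ under the canonical connection. By Lemma \ref{comparelap}, $\Delta$ differs from the Levi-Civita Laplacian by first-order torsion terms, so the classical Riemannian comparison cannot be invoked as a black box; the distance function must be analyzed intrinsically in the canonical connection, with the torsion and $(2,0)$-curvature contributions tracked carefully, which is precisely why the corresponding boundedness hypotheses appear in the theorem. Once that comparison estimate is available, the remainder of the argument above is the familiar combination of Yau's two tricks.
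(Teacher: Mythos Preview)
Your argument is correct but takes a different route from the paper. You split into two cases and treat them with separate techniques: for bounded $u$ you apply the Omori--Yau maximum principle (essentially Proposition~\ref{maxprinc2}) to $-u$, and for unbounded $u$ you run a Cheng--Yau style localization with the cutoff $\rho_R^{k}$. The paper instead follows Yau's trick from \cite{Ya2}: it applies Proposition~\ref{maxprinc2} once, to the single auxiliary function $v=(u+c)^{-\alpha/2}$, which is automatically bounded below regardless of whether $u$ is bounded above. Expressing $\Delta u$ in terms of $\Delta v$ and $|\nabla v|$ at the points $x_\ve$ and letting $\ve\to 0$ both rules out $\sup_M u=\infty$ and yields the sharp bound $(B/A)^{1/\alpha}$ in a single stroke. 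The paper's route is shorter and more unified; yours is more hands-on and avoids the clever choice of $v$, though your part (ii) needs some care---the condition $k\alpha\geq 2$, which you correctly allude to, is precisely what lets the powers of $\rho_R$ on the right be absorbed so that $(\rho_R^{k}u)(x_R)$, and hence $\sup_{B(p_0,R)}u$, is bounded independently of $R$. Both approaches ultimately rest on the Laplacian comparison Theorem~\ref{laplcomp}, as you identify.
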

This can be proved exactly in the same way as in \cite{Ya2}, once we have the following:

\begin{prop}[Maximum principle]\label{maxprinc2}
Let $(M,J,g)$ be a complete almost-Hermitian manifold with second Ricci curvature bounded below and 
with torsion and $(2,0)$ part of the curvature bounded.
Let $u$ be a real function that is bounded from below. Then given any $\ve>0$ there exists a point $x_\ve\in M$ such that
$$\liminf_{\ve\to 0}u(x_\ve)=\inf_M u,$$
$$|\nabla u|(x_\ve)\leq\ve,$$
$$\Delta u(x_\ve)\geq-\ve.$$
\end{prop}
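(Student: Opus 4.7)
The plan is to mimic Yau's classical argument for the generalized maximum principle: one constructs an auxiliary function of the form $v_\varepsilon=u+\varepsilon\rho$, where $\rho$ is essentially the distance from a fixed base point, checks that $v_\varepsilon$ attains its infimum, and then reads off the three stated estimates from the usual conditions at a minimum together with a Laplacian comparison theorem for $\rho$. The input we need is a Laplacian comparison for the canonical Laplacian, which is the main technical content of Section 4.

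Fix a base point $p\in M$ and set $r(x)=\dist(x,p)$ and $\rho=\sqrt{1+r^2}$. The core step is to establish a bound of the form $\Delta\rho\leq C$ outside the cut locus of $p$, where $C$ depends only on the lower bound for the second Ricci curvature and the bounds on the torsion and the $(2,0)$ part of the curvature. One also trivially has $|\nabla\rho|\leq 1$.

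Given $\varepsilon>0$, set $v_\varepsilon=u+\varepsilon\rho$. Since $u$ is bounded below and $\rho\to\infty$ at infinity, $v_\varepsilon$ is proper, so it attains its infimum at some point $x_\varepsilon\in M$. Picking any sequence $y_k$ with $u(y_k)\to\inf_M u$ and using $v_\varepsilon(x_\varepsilon)\leq v_\varepsilon(y_k)$ together with $\rho\geq 1$, one obtains $u(x_\varepsilon)\leq u(y_k)+\varepsilon(\rho(y_k)-1)$; sending first $\varepsilon\to 0$ and then $k\to\infty$ gives $\lim_{\varepsilon\to 0}u(x_\varepsilon)=\inf_M u$. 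At the minimum $\nabla v_\varepsilon(x_\varepsilon)=0$, so $|\nabla u|(x_\varepsilon)=\varepsilon|\nabla\rho|(x_\varepsilon)\leq\varepsilon$, and by the remark following Lemma \ref{comparelap} the canonical Laplacian satisfies $\Delta v_\varepsilon(x_\varepsilon)\geq 0$; therefore
\begin{equation*}
\Delta u(x_\varepsilon)\geq -\varepsilon\,\Delta\rho(x_\varepsilon)\geq -C\varepsilon.
\end{equation*}
A trivial rescaling in $\varepsilon$ delivers the three stated inequalities.

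The only remaining issue is that $x_\varepsilon$ may lie in the cut locus of $p$, where $\rho$ is merely Lipschitz. This is handled by Calabi's trick: one replaces $p$ by a point $p'$ slightly along a minimizing geodesic from $p$ to $x_\varepsilon$, so that $\dist(\cdot,p')$ is smooth near $x_\varepsilon$ and satisfies the same Laplacian comparison, while the shifted auxiliary function still has a local minimum near $x_\varepsilon$. The main obstacle of the whole argument is therefore the Laplacian comparison itself: the classical proof must be adapted by working with local $J$-holomorphic discs in lieu of complex coordinates (which are unavailable in the non-integrable case), with careful bookkeeping of the contributions of the torsion and the $(2,0)$ curvature that would be absent in the K\"ahler setting.
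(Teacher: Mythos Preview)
Your proposal is correct and follows essentially the same route as the paper: perturb $u$ by $\varepsilon$ times a distance-type function, use the Laplacian comparison (Theorem~\ref{laplcomp}) for the canonical Laplacian to bound $\Delta\rho$, read off the three conditions at the minimum, and invoke Calabi's trick for the cut locus. The only cosmetic difference is that the paper uses $\rho=r$ directly (noting that $\Delta r\le C$ once $r>1$), whereas you smooth it to $\rho=\sqrt{1+r^2}$; both choices yield a global bound $\Delta\rho\le C$. One small imprecision: in Calabi's trick the shifted function $u+\varepsilon\,\dist(\cdot,p')$ actually attains its \emph{global} minimum exactly at $x_\varepsilon$ (by the triangle inequality, as the paper spells out), not merely a local minimum ``near'' $x_\varepsilon$.
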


The proof of this follows the one in \cite{Ya1} and relies on the

\begin{theorem}[Laplacian comparison]\label{laplcomp}
Let $(M,J,g)$ be a complete almost-Hermitian manifold with second Ricci curvature bounded below
by $-A_1$, torsion bounded by $A_2$ and $(2,0)$ part of the curvature bounded by $A_3$.
Let $\rho$ be the distance from a fixed point $o\in M$. Then at any point where $\rho$ is smooth we have
$$\Delta\rho\leq \frac{2n}{\rho}+C,$$
where $C$ depends only on $A_1,A_2,A_3$ and the dimension of $M$.
Moreover this holds on the whole of $M$ in the sense of distributions.
\end{theorem}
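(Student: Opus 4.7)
The proof follows the classical Riemannian Laplacian comparison strategy---second variation of arc length with test Jacobi-like fields along a minimizing geodesic---with extra work to handle the fact that the canonical Laplacian involves $\nabla$ while the distance function is governed by the Levi-Civita connection. First, by Lemma \ref{comparelap} applied to $\rho$,
\[
\Delta\rho = \Delta^{LC}\rho - 2M^i_{ji}\rho_{\bar j} - 2\overline{M^i_{ji}}\rho_j,
\]
and since $|\nabla\rho|_g = 1$ where $\rho$ is smooth and $|M|\leq A_2$, we get $\Delta\rho \leq \Delta^{LC}\rho + C_1(A_2, n)$, so it is enough to prove the same bound for $\Delta^{LC}\rho$.

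Fix $p$ where $\rho$ is smooth, and let $\gamma:[0,r]\to M$ be the unit-speed minimizing Riemannian geodesic from $o$ to $p$. Pick a unitary $(1,0)$-frame $\{e_a\}_{a=1}^n$ at $p$ with $\gamma'(r) = \tfrac{1}{\sqrt 2}(e_1 + \bar e_1)$, giving a real orthonormal frame $\{\nu_A\}_{A=1}^{2n}$ with $\nu_{2n} = \gamma'(r)$. Canonically parallel-transport $\{e_a\}$ along $\gamma$: since $\nabla g = \nabla J = 0$ the transported frames $\{e_a(t)\}$ remain unitary, and they play along $\gamma$ the role of the holomorphic coordinate frames that do not exist in the non-integrable case---their genuine almost-complex counterparts being the local $J$-holomorphic discs of Nijenhuis--Woolf. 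Inserting the test vector fields $f(t)\nu_A(t)$ with $f(t) = t/r$ into the second variation of arc length along $\gamma$, and summing over the $2n-1$ directions perpendicular to $\gamma'(r)$, gives
\[
\Delta^{LC}\rho \leq \frac{2n-1}{r} + \int_0^r f^2\,\mathrm{Ric}^{LC}(\gamma',\gamma')\,dt + C_2\,A_2^2\,r.
\]
The last term is a torsion correction, arising because each $\nu_A(t)$ is canonically parallel but not Levi-Civita parallel: $|\nabla^{LC}_t \nu_A| = |A(\gamma', \nu_A)|$, where $A = \nabla - \nabla^{LC}$ is the difference tensor, determined algebraically by the torsion.

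The main technical step is to bound $\mathrm{Ric}^{LC}(\gamma',\gamma')$ below. From $\nabla = \nabla^{LC} + A$ we have $R^\nabla = R^{LC} + d^{\nabla^{LC}} A + A\wedge A$, so after Ricci tracing, $\mathrm{Ric}^{LC}$ equals the second canonical Ricci (bounded below by $-A_1$), plus terms quadratic in $A$ (bounded by $A_2^2$), plus terms linear in the first covariant derivatives of $A$---that is, of the torsion components $M$ and $N$. The hypotheses do not bound these derivatives directly, but the first Bianchi identity \eqref{bianchi},
\[
K^i_{j\bar k\bar\ell} = 2 M^i_{pj} N^p_{\bar k\bar\ell} + N^i_{\bar k\bar\ell, j},
\]
and its siblings obtained from the remaining $(p,q)$ components of $d\Theta^i = \Omega^i_j\wedge\theta^j - \theta^i_j\wedge\Theta^j$, express the relevant first derivatives of $M$ and $N$ in terms of the $(2,0)$ and $(0,2)$ parts of the canonical curvature (bounded by $A_3$) plus algebraic products of $M$ and $N$. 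Collecting everything gives $\mathrm{Ric}^{LC}(V,V) \geq -C_3(A_1, A_2, A_3, n)$ for every real unit $V$, whence $\Delta^{LC}\rho \leq (2n-1)/r + C_4$ and finally $\Delta\rho \leq 2n/r + C$. The main obstacle is precisely this curvature comparison: identifying which first covariant derivatives of torsion actually appear in $\mathrm{Ric}^{LC}$ and eliminating them via the Bianchi identities, so that the $(2,0)$ curvature bound $A_3$---and not separate bounds on $\nabla M,\nabla N$---suffices. The distributional inequality on all of $M$ then follows from the standard Calabi trick at cut points of $o$.
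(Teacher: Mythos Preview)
Your approach is genuinely different from the paper's. The paper never passes through the Levi-Civita connection: it computes $\Delta\rho$ directly by varying the minimal geodesic through a one-parameter family of $J$-holomorphic discs $F_t:D\to M$ (supplied by \cite{IR}, \cite{NW}), so that the complex second variation $\partial^2 L/\partial z\,\partial\bar z$ produces, via Lemmas \ref{lemmalap} and \ref{lemmalap2}, precisely $d(Jd\rho)^{(1,1)}(v,\bar v)$ and hence $\Delta\rho$ after tracing over a unitary frame. The curvature term that emerges is the bisectional curvature $B(\gamma',v)$ (whose trace over $v$ is the second Ricci), together with explicit torsion-squared and $(2,0)$-curvature corrections; no derivative of torsion survives the computation. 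The test function is then $f(t)=(t/\rho)^\alpha$ with $\alpha$ chosen depending on $\rho$, not $f(t)=t/\rho$.

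Your route has a real gap at exactly the step you flag as the main obstacle. You assert that $\mathrm{Ric}^{LC}$ can be bounded below using only $A_1,A_2,A_3$ by eliminating the first derivatives of $M$ and $N$ via Bianchi identities, claiming that the siblings of \eqref{bianchi} express these derivatives in terms of the $(2,0)$ and $(0,2)$ curvature. That is not so: the $(2,1)$ component of $d\Theta^i=\Omega^i_j\wedge\theta^j-\theta^i_j\wedge\Theta^j$ relates $M^i_{jk,\bar\ell}$ to the full $(1,1)$ curvature $R^i_{jk\bar\ell}$, for which no pointwise bound is assumed---only the second-Ricci trace is controlled by $A_1$. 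Unless you can show that the exact combination of $M^i_{jk,\bar\ell}$ entering $\mathrm{Ric}^{LC}(V,V)$ reduces, after Bianchi, to the second Ricci and nothing else, the argument does not close. Note also that the hypothesis on the $(2,0)$ curvature is only the bound $|K^i_{jk\ell}\overline{X^i}Y^jY^kX^\ell|\le A_3\|X\|^2\|Y\|^2$ on one specific contraction, not on the full tensor $K$, which further limits what you can extract even from \eqref{bianchi} itself. The paper's own Remark after Proposition \ref{maxprinc2} corroborates this concern: the author states explicitly that the standard Levi-Civita comparison would require the \emph{separate} assumption that $\mathrm{Ric}^{LC}$ is bounded below, and adopted the $J$-holomorphic disc method precisely to avoid any hypothesis on the Levi-Civita connection.
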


\begin{proof}
Fix a point $x\in M$ outside the cut locus of $o$, and a minimal unit-speed geodesic $\gamma:[0,\rho(x)]\to M$ from $o$ to $x$. 
Let $D\subset \mathbb{C}$ be the unit disc, $z$ be the coordinate on $D$ and $e=\partial/\partial z|_{z=0}$ be the
tangent vector at the origin.
If $v\in T'_x M$ is small enough then Proposition 1.1 in \cite{IR} (see also \cite{NW}) gives a $J$-holomorphic
map $F:D\to M$ with $F(0)=x$ and $F_*(e)=v$, which depends smoothly on $x$ and $v$.
Now extend $v$ to a section $v(t)$ of $T'M$ along $\gamma$, that is small enough and vanishes at $o$. Using Theorem A1 
of \cite{IR} and the compactness of the support of $\gamma$, we can extend $F$ to a smooth family
$F_t:D\to M$ of $J$-holomorphic discs, with the properties that $F_{\rho(x)}=F$, $F_t(0)=\gamma(t)$, $F_{t*}(e)=v(t)$
and $F_0(z)=o$. We'll write $F(t,z)=F_t(z)$ so that we have a map $F:[0,\rho(x)]\times D\to M$. Notice that
we can also allow $v=\gamma'(\rho(x))$.
The vector $F_*(\partial/\partial t)$ belongs to $T^{\mathbb{R}} M\subset T^{\mathbb{C}} M$, and
so it can be written as $T+\ov{T}$ where $T\in T'M$. Moreover the fact that $F_t$ is $J$-holomorphic 
implies that the vector $S=F_*(\partial /\partial z)$ belongs to $T'M$. Notice that
both $T$ and $S$ depend on $(t,z)$ and that $S(t,0)=v(t)$, $(T+\ov{T})(t,0)=\gamma'(t)$.
The map $F$ that we just constructed should be thought of as a $J$-holomorphic variation of $\gamma$, and we are going
to compute the second variation of the arclength. This is the function $L:D\to\mathbb{R}$ defined by
\begin{equation}\label{comput1}
L(z)=\sqrt{2}\int_0^{\rho(x)}\| T(t,z)\| dt,
\end{equation}
which is just the length of the curve $t\mapsto F(t,z)$, that goes from $o$ to $F(\rho(x),z)$, a point near $x$.
Fixing for a moment $(t,z)$, we can take a local unitary frame $\{e_i\}$ near $F(t,z)$ and write $T=T^i e_i$, $S=S^j e_j$.
Then
$$d(\|T\|)=d (T^i \ov{T^i})^{\frac{1}{2}}=\frac{1}{2\|T\|}(T^i_p \ov{T^i}\theta^p+T^i_{\ov{p}}\ov{T^i\theta^p}+T^i \ov{T^i_p\theta^p}+
T^i \ov{T^i_{\ov{p}}}\theta^p),$$
\begin{equation}\label{ddz}
\frac{\del}{\del z}\|T\|=\langle d\|T\|,S\rangle=\frac{T^i_p \ov{T^i}S^p+T^i \ov{T^i_{\ov{p}}}S^p}{2\|T\|}.
\end{equation}
The term $T^i \ov{T^i_{\ov{p}}}S^p$ can be computed as follows: 
$$[T+\ov{T},\ov{S}]=F_*([\partial /\partial t,\partial /\partial \ov{z}])=0,$$ and so
$[T,\ov{S}]=[\ov{S},\ov{T}].$
Combining \eqref{need1}, \eqref{need2}, \eqref{covariant2} and \eqref{need3} we get
$$\theta^i([T,\ov{S}])=-\theta^i(\nabla_{\ov{S}}T)=-\ov{S^p}T^i_{\ov{p}},$$
because $\Theta^i$ has no $(1,1)$ component. But we also have
$$\theta^i([T,\ov{S}])=\theta^i([\ov{S},\ov{T}])=-2\Theta^i(\ov{S},\ov{T})=-2N^i_{\ov{j}\, \ov{k}}\ov{S^j T^k},$$
and so
\begin{equation}\label{useful}
\ov{S^p}T^i_{\ov{p}}=2N^i_{\ov{j}\, \ov{k}}\ov{S^j T^k}.
\end{equation}
By the same token, $\theta^i([T,S])=T^j S^i_j - S^j T^i_j -2M^i_{jk} T^j S^k,$ but we also have that
$\theta^i([T,S])=\theta^i([S,\ov{T}])=-\ov{T^j}S^i_{\ov{j}},$ and so
\begin{equation}\label{useful2}
S^j T^i_j=T^j S^i_j+\ov{T^j}S^i_{\ov{j}}-2M^i_{jk} T^j S^k=\langle\theta^i,\nabla_{T+\ov{T}}S-\tau(T,S)\rangle.
\end{equation}
Also, $[S,\ov{S}]=F_*([\partial /\partial z,\partial /\partial \ov{z}])=0$ implies $\ov{S^p}S^i_{\ov{p}}=0$. 
Using this, we differente \eqref{ddz} once more and we get
\begin{eqnarray} \nonumber\label{comput2}
\frac{\del^2}{\del z\del \ov{z}}\|T\|&=&\left\langle d\left(\frac{T^i_p \ov{T^i}S^p+2\ov{N^i_{\ov{j}\, \ov{k}}}T^i S^j T^k
}{2\|T\|}\right),\ov{S}\right\rangle\\
&=&-\frac{\left|T^i_p \ov{T^i}S^p+2\ov{N^i_{\ov{j}\, \ov{k}}}T^i S^j T^k\right|^2}{4\|T\|^3}
+\frac{T^i_{p\ov{q}} \ov{T^i}S^p\ov{S^q}+T^i_p \ov{T^i_q}S^p\ov{S^q}}{2\|T\|}\\ \nonumber
&&\mbox{} +\frac{2\ov{N^i_{\ov{j}\, \ov{k}, q}}T^i S^j T^k\ov{S^q}+2\ov{N^i_{\ov{j}\, \ov{k}}}T^i_{\ov{q}} S^j T^k\ov{S^q}+
2\ov{N^i_{\ov{j}\, \ov{k}}}T^i S^j T^k_{\ov{q}}\ov{S^q}}{2\|T\|}.
\end{eqnarray}
To deal with the term $T^i_{p\ov{q}} \ov{T^i}S^p\ov{S^q}$ we take the exterior derivative of \eqref{covariant} and
using \eqref{covariant3}, \eqref{covariant4} we get
$$T^j\Omega^i_j=T^i_{pq}\theta^q\wedge\theta^p+T^i_{p\ov{q}}\ov{\theta^q}\wedge\theta^p+
T^i_p\Theta^p
+T^i_{\ov{p}q}\theta^q\wedge\ov{\theta^p}+T^i_{\ov{p}\, \ov{q}}\ov{\theta^q}\wedge\ov{\theta^p}+T^i_{\ov{p}}\ov{\Theta^p},$$
whose $(1,1)$ part gives
$T^i_{p\ov{q}}=T^i_{\ov{q}p}-T^j R^i_{jp\ov{q}},$
and so
\begin{equation}\label{comput3}
T^i_{p\ov{q}}\ov{T^i}S^p\ov{S^q}=T^i_{\ov{q}p}\ov{T^i}S^p\ov{S^q}-T^j\ov{T^i}S^p\ov{S^q} R^i_{jp\ov{q}}.
\end{equation}
The term $T^i_{\ov{q}p}\ov{T^i}S^p\ov{S^q}$ can now be computed as follows:
\begin{equation*}
\begin{split}
0&=\langle d(\ov{S^q} T^i_{\ov{q}} \ov{T^i}-2N^i_{\ov{j}\, \ov{k}}\ov{S^j T^k T^i}), S\rangle=
\ov{S^q} T^i_{\ov{q}p}\ov{T^i} S^p+\ov{S^q} T^i_{\ov{q}} \ov{T^i_{\ov{p}}}S^p\\
&-2N^i_{\ov{j}\, \ov{k},p}\ov{S^j T^k T^i}S^p-2N^i_{\ov{j}\, \ov{k}}\ov{S^j T^k_{\ov{p}} T^i}S^p
-2N^i_{\ov{j}\, \ov{k}}\ov{S^j T^k T^i_{\ov{p}}}S^p,
\end{split}
\end{equation*}
and using \eqref{useful} we get
\begin{equation}\label{comput10}
T^i_{\ov{q}p}\ov{T^i} S^p\ov{S^q}=2N^i_{\ov{j}\, \ov{k},p}\ov{S^j T^k T^i}S^p+2N^i_{\ov{j}\, \ov{k}}\ov{S^j T^k_{\ov{p}} T^i}S^p.
\end{equation}
Combining \eqref{comput2}, \eqref{comput3}, \eqref{comput10},  \eqref{useful2}, \eqref{useful}, \eqref{bianchi}, 
\eqref{switch} and \eqref{bisect} we get
\begin{equation*}
\begin{split}
&\frac{\del^2}{\del z\del \ov{z}}\|T\|\leq\frac{-B(T,S)\|T\|^2\|S\|^2+|\langle\theta^i,\nabla_{T+\ov{T}}S
-\tau(T,S)\rangle|^2+|\langle \theta^i,\tau(\ov{S},\ov{T})\rangle|^2}{2\|T\|}\\
&+\frac{4\textrm{Re}(\ov{K^p_{ikj}}\ov{S^j T^k T^i}S^p-2M^i_{qp} N^q_{\ov{j}\,\ov{k}}\ov{S^j T^k T^i}S^p+
N^i_{\ov{j}\,\ov{k}}\ov{S^j T^i N^k_{\ov{p}\,\ov{q}}}S^p T^q)}{2\|T\|}\\
&\leq \frac{-B(T,S)\|T\|^2\|S\|^2+\|\nabla_{T+\ov{T}}S-\tau(T+\ov{T},S)\|^2+(13A_2^2+4A_3) \|S\|^2\|T\|^2}{2\|T\|}.
\end{split}
\end{equation*}
All the terms on the right hand side are tensorial, and hence independent of the choice of unitary frame.
Combining this with \eqref{comput1} and setting $z=0$ we finally get
\begin{equation}\label{comput8}
\begin{split}
\frac{\del^2}{\del z\del \ov{z}}L(z)\Big|_{z=0}&\leq\frac{1}{2}\int_0^{\rho(x)}
\|\nabla_{\gamma'(t)} v(t)+\tau(v(t),\gamma'(t))\|^2 dt\\
&+\frac{1}{2}\int_0^{\rho(x)}(C- B(\gamma'(t),v(t)))\|v(t)\|^2 dt,
\end{split}
\end{equation}
where $C=13A_2^2+4A_3$. Notice that the right hand side is homogeneous
of degree 2 in $v(t)$, so it doesn't matter that we had picked $v(t)$ small in the first place. Define 
$G:D\to M$ to be the $J$-holomorphic disc $G(z)=F(\rho(x),z)$, originally called $F$, and notice that
since $\gamma$ is minimizing we have
$L(z)\geq (\rho\circ G)(z)$ and $L(0)=(\rho\circ G)(0)$, hence
\begin{equation}\label{comput6}
\frac{\del^2}{\del z\del \ov{z}}L(z)\Big|_{z=0}\geq \frac{\del^2}{\del z\del \ov{z}}(\rho\circ G)(z)\Big|_{z=0}.
\end{equation}
But now Lemma \ref{lemmalap} and Lemma \ref{lemmalap2} imply that
\begin{equation}\label{comput7}
\begin{split}
\sqrt{-1}d(Jd\rho)^{(1,1)}(v,\ov{v})&=\sqrt{-1}d(J_{\mathbb{C}} d(\rho\circ G))(e,\ov{e})\\
&=4\frac{\del^2}{\del z\del \ov{z}}(\rho\circ G)(z)\Big|_{z=0},
\end{split}
\end{equation}
where $J_{\mathbb{C}}$ is the standard complex structure on $\mathbb{C}$. 

Now we pick $v$ to have unit length, and we choose $v(t)$ to be of the form $v(t)=f(t)w(t)$ where $w(t)$ is the parallel transport with respect to $\nabla$ 
of $v$ along $\gamma$, and $f(t)\geq 0$ is a smooth increasing function that satisfies $f(0)=0$ and $f(\rho(x))=1$. 
 Then, using \eqref{comput8} and the fact that $\nabla g=0$, we get
\begin{equation}\label{comput9}
\begin{split}
\frac{\del^2}{\del z\del \ov{z}}L(z)\Big|_{z=0}&\leq \frac{1}{2}\int_0^{\rho(x)}
\|f'(t)w(t)+f(t)\tau(w(t),\gamma'(t))\|^2dt\\
&+\frac{1}{2}\int_0^{\rho(x)}f(t)^2(C- B(\gamma'(t),w(t)))dt\\
&\leq\frac{1}{2}\int_0^{\rho(x)}\left(f'(t)^2+2A_2 f(t)f'(t)+A_2^2 f(t)^2\right)dt\\
&+\frac{1}{2}\int_0^{\rho(x)}f(t)^2(C- B(\gamma'(t),w(t)))dt.
\end{split}
\end{equation}
Now we combine \eqref{comput6}, \eqref{comput7}, \eqref{comput9} and sum them up when $v$ ranges 
in $v_1,\dots,v_n$, a unitary basis of $T_x'M$, and using Lemma \ref{lemmalap} we get
$$\Delta\rho(x)\leq 2\int_0^{\rho(x)}\left(nf'(t)^2+2nA_2 f(t)f'(t)+C'f(t)^2\right)dt,$$
where $C'=nC+A_1+nA_2^2$.
Next, following \cite{CY}, we pick $$f(t)=\left(\frac{t}{\rho(x)}\right)^\alpha,$$
where $\alpha>1$ will be chosen presently. With this choice we can easily compute that
$$\Delta\rho(x)\leq \frac{2n}{\rho(x)}+2nA_2+\frac{2n(\alpha-1)^2}{(2\alpha-1)\rho(x)}+\frac{2C'}{2\alpha+1}\rho(x).$$
Now we choose $\alpha$, depending on $\rho(x)$, such that the last two terms on the
right hand side are equal. Thus
$$\frac{n(\alpha-1)^2}{(2\alpha-1)\rho(x)}+\frac{C'}{2\alpha+1}\rho(x)=2\sqrt{\frac{(\alpha-1)^2}{4\alpha^2-1}nC'}\leq\sqrt{nC'},$$
which is what we want. Finally to show that the inequality in the sense of distributions holds on the whole manifold
we can just follow the argument in \cite{SY}, pag.7.
\end{proof}

\begin{proof}[Proof of Proposition \ref{maxprinc2}.] Once we have established the Laplacian comparison
Theorem \ref{laplcomp}, the proof is standard, but we include it for completeness.
We'll use a trick due to Calabi \cite{Ca} to avoid the cut locus of $o$. If the infimum of $u$ is attained in the geodesic ball of radius $1$ centered at $o$ then there's nothing
to prove, so that we may assume that $\rho>1$. Then Theorem \ref{laplcomp} gives $\Delta\rho\leq C$ for
a uniform constant $C$.
For any $\ve>0$ the function $u+\ve\rho$ attains its infimum at a point $x_\ve\in M$. Let $\gamma$ be a minimal
unit-speed geodesic from $o$ to $x_\ve$, $\ti{x}$ be another point on $\gamma$ and
denote by $\ti{\rho}$ the distance from $\ti{x}$. Then for any $x\in M$ we have
$$u(x)+\ve\ti{\rho}(x)=u(x)+\ve\rho(x)-\ve\rho(x)+\ve\ti{\rho}(x)\geq u(x)+\ve\rho(x)-\ve\rho(\ti{x}),$$
and taking the infimum over $x$ we get
$$\inf_M (u+\ve \ti{\rho})\geq u(x_\ve)+\ve\rho(x_\ve)-\ve\rho(\ti{x})=u(x_\ve)+\ve\ti{\rho}(x_\ve).$$
Hence the function $u+\ve \ti{\rho}$ also attains its infimum at $x_\ve$. But we can now choose $\ti{x}$ outside the cut locus of $x_\ve$,
so that $\ti{\rho}$ is smooth at $x_\ve$, and using the remark after Lemma \ref{comparelap} we get
$$|\nabla u|(x_\ve)=\ve |\nabla \ti{\rho}|(x_\ve)=\ve,$$
$$\Delta u(x_\ve)\geq -\ve\Delta \ti{\rho}\geq -\ve C.$$
Finally we check that $\liminf_{\ve\to 0}u(x_\ve)=\inf_M u.$ If not, there exist $\ov{x}\in M$ and $\delta>0$ such that
$u(\ov{x})<u(x_\ve)-\delta$ for all $\ve$ small. We still have
$$u(\ov{x})+\ve\rho(\ov{x})\geq u(x_\ve)+\ve\rho(x_\ve).$$ If $\rho(x_\ve)$ is bounded then we can take a convergent
subsequence of points and letting $\ve\to 0$ we get a contradiction. If $\rho(x_\ve)$ is unbounded, we take $\ve$ small
so that $\rho(x_\ve)>\rho(\ov{x})$ and get
$$u(x_\ve)+\ve\rho(x_\ve)\leq u(\ov{x})+\ve\rho(\ov{x})<u(x_\ve)-\delta+\ve\rho(x_\ve),$$
which again is absurd.\end{proof}

\begin{proof}[Proof of Theorem \ref{maxprinc}]
Now that we have Proposition \ref{maxprinc2}, the argument is exactly the same as in \cite{Ya2} so we'll just 
sketch it. One defines a function $$v=(u+c)^{-\frac{\alpha}{2}},$$
where $c>0$ is fixed. 
Since $v$ is bounded below we can apply Proposition \ref{maxprinc2}
and for any $\ve>0$ we get a point $x_\ve\in M$ where we have
$$Au^{1+\alpha}-Bu\leq \Delta u\leq\frac{2}{\alpha}\left((u+c)^{\frac{\alpha+2}{2}}+\ve\frac{\alpha+2}{\alpha}
(u+c)^{1+\alpha}\right)\ve.$$
If $\sup_M u=+\infty$ then we can let $\ve\to 0$ in the last inequality and get a contradiction.
So $\sup_M u<+\infty$ and again letting $\ve\to 0$ we get the conclusion.
\end{proof}

\begin{remark} Instead of our Theorem \ref{laplcomp} we could have used the standard Laplacian comparison,
as in \cite{Ya1}. This gives a similar result for the Laplacian of the Levi-Civita connection, under the
assumption that the Ricci curvature of the Levi-Civita connection is bounded below. Notice that to apply this
to our situation we still need the assumption that the torsion be bounded, to compare the two Laplacians as
in Lemma \ref{comparelap}. The reason why we chose not to do this is because in our main theorems we don't want 
any assumption on the Levi-Civita connection, but only on the canonical connection.
\end{remark}

\section{The Schwarz Lemma}
In this section we prove Theorems \ref{main1} and \ref{main2}. Using Cartan's formalism of moving
frames, and the canonical connection, we prove in \eqref{final} a generalization of a formula due to Chern and Lu
\cite{Lu} in the integrable case. The Schwarz Lemma then follows at once from the maximum principle, Theorem \ref{maxprinc}.
The corresponding formula for the volume form is much easier, and already appears in \cite{GH2}.\\

Let $(M, J, g)$ and $(\ti{M}, \ti{J}, \ti{g})$ be two almost-Hermitian manifolds of dimensions $2n$ and $2\ti{n}$ respectively and let $f : M \rightarrow \ti{M}$ be an almost-complex mapping.
Let $\{ e_i \}$ and $\{ \theta^i \}$ be local unitary frames and coframes for $g$ 
on $M$ and let $\{ \ti{e}_i \}$ and $\{ \ti{\theta}^i \}$ be those for $\ti{g}$ on $\ti{M}$. 
 Let $\nabla$ and $\tilde{\nabla}$ be the canonical connections for $(M,J,g)$ and $(\ti{M}, \ti{J}, \ti{g})$ 
respectively.  We will use $\theta^i_j$, $\Theta^i$, $\Omega^i_j$ and 
$\ti{\theta}^\alpha_\beta$, $\ti{\Theta}^\alpha$, $\ti{\Omega}^\alpha_\beta$ to denote the connection 1-forms, 
torsion and curvature for $\nabla$ and $\tilde{\nabla}$ respectively.  
Here we use roman letters $i,j,k, \ldots = 1, 2, \ldots, n$ for indices 
on $M$ and greek letters $\alpha, \beta, \ldots = 1,2, \ldots, \ti{n}$ for indices on $\ti{M}$.

Since $f$ is almost-complex, there exist functions $a^{\al}_i$ on $M$ such that
\begin{equation} \label{eqna}
f^*\ti{\theta}^{\al} = a^{\al}_i \theta^i.
\end{equation}
Define a function $u$ by $u = \textrm{tr}_g (f^*{\ti{g}})$.  Locally we can write $u$ as
$$ u = a_i^{\alpha} \ov{a_i^{\alpha}}.$$
From now on, we will often omit writing the pullback $f^*$.  
Differentiating (\ref{eqna}) and using the first structure equations for $\nabla$ and $\ti{\nabla}$ we obtain
\begin{eqnarray} \nonumber
d \ti{\theta}^{\al} & = & da_i^{\al} \wedge \theta^i -  a_i^{\al} \theta_j^i \wedge \theta^j + a_i^{\al} \Theta^i \\
& = & - a_i^{\be} \ti{\theta}_{\beta}^{\al} \wedge \theta^i + \ti{\Theta}^{\al}.
\end{eqnarray}
Rearranging this gives
\begin{equation}
(da_i^{\al} +  \ti{\theta}_{\be}^{\al} a_i^{\be} - a_j^{\al} \theta_i^j ) \wedge \theta^i =
\ti{\Theta}^{\al} - a_i^{\al} \Theta^i .
\end{equation}
Since the right hand side has no $(1,1)$ component, it follows that we can define functions $a_{ik}^{\al}$ by
\begin{equation} \label{eqna2}
d a_i^{\al} + \ti{\theta}_{\be}^{\al} a_{i}^{\be} - a_j^{\al} \theta_i^j = a^{\al}_{ik} \theta^k.
\end{equation}
Now apply the exterior derivative to both sides of this equation, substitute from the structure equations and (\ref{eqna2}), and cancel some terms to obtain
\begin{eqnarray*}
\lefteqn{a_i^{\beta} \ti{\Omega}_{\beta}^{\al} + a_{ik}^{\beta} \theta^k \wedge \ti{\theta}_{\be}^{\al} - a_j^{\al} \Omega_i^j - a_{jk}^{\al} \theta^k \wedge \theta_i^j} \\
& = & da_{ik}^{\al} \wedge \theta^k + a_{ik}^{\al} (- \theta_j^k \wedge \theta^j + \Theta^k),
\end{eqnarray*}
which can be rewritten as
\begin{eqnarray} \nonumber
\lefteqn{ ( d a_{ik}^{\al} - a_{ij}^{\al} \theta_k^j + a_{ik}^{\beta} \ti{\theta}_{\beta}^{\al} - a_{jk}^{\al} \theta_i^j ) \wedge \theta^k } \\ \label{eqnda2}
& = & a_i^{\beta} \ti{\Omega}_{\beta}^{\al} - a_j^{\al} \Omega_i^j - a_{ik}^{\al} \Theta^k.
\end{eqnarray}
Define functions $a_{ik\ell}^{\al}$ and $a^{\al}_{ik\ov{\ell}}$ by
\begin{equation} \label{eqnda3}
da^{\alpha}_{ik} - a^{\al}_{ij} \theta_k^j + a^{\be}_{ik} \ti{\theta}^{\al}_{\be} - a^{\al}_{jk} \theta_i^j = a^{\al}_{ik\ell} \theta^\ell + a^{\al}_{ik\ov{\ell}} \ov{\theta^\ell}.
\end{equation}
Then taking the $(1,1)$ part of (\ref{eqnda2}) we obtain
\begin{equation} \label{eqncurv1}
a^{\al}_{ik\ov{\ell}} \theta^k \wedge \ov{\theta^\ell} = -a_i^{\be} \ti{R}_{\beta \ga \ov{\de}}^{\al} \ti{\theta}^{\ga} \wedge \ov{\ti{\theta}^{\de}} + a_j^{\al} R_{ik\ov{\ell}}^j \theta^k \wedge \ov{\theta^\ell}.
\end{equation}
We now wish to calculate $du$.  Using (\ref{eqna2}) we have
$$du =  \ov{a_i^{\al}} a^{\al}_{ik} \, \theta^k + a_i^{\al} \ov{a_{ik}^{\al}\theta^k},$$
which means  $\partial u =  \ov{a_i^{\al}} a^{\al}_{ik} \, \theta^k$,
$\ov{\partial}u = a_i^{\al} \ov{a_{ik}^{\al}\theta^k}.$
Then
\begin{eqnarray*}
d \partial u & = & a_{ik}^{\al} d \ov{ a_i^{\al}} \wedge \theta^k + 
\ov{a_i^{\al}} da_{ik}^{\al} \wedge \theta^k + \ov{a_i^{\al}} a^{\al}_{ik} d\theta^k \\
& = & a_{ik}^{\al} \ov{a_{i\ell}^{\al}\theta^\ell} \wedge \theta^k + 
\ov{a_i^{\al}} ( a^{\al}_{ik\ell} \theta^\ell + a^{\al}_{ik \ov{\ell}} \ov{\theta^\ell} )\wedge \theta^k + \ov{a_i^{\al}} a_{ik}^{\al} \Theta^k ,
\end{eqnarray*}
where we have used (\ref{eqna2}), (\ref{eqnda3}) and the first structure equation.  Hence
\begin{equation}
(d\partial u)^{(1,1)} = - a_{ik}^{\al} \ov{a_{i\ell}^{\al}} \theta^k \wedge \ov{\theta^\ell} - \ov{a_i^{\al}} a^{\al}_{ik\ov{\ell}} \theta^k \wedge \ov{\theta^\ell}.
\end{equation}
Substituting from (\ref{eqncurv1}) we have
\begin{eqnarray} \nonumber
(d\partial u)^{(1,1)} & = & \left(- a_{ik}^{\al} \ov{a_{i\ell}^{\al}}  + \ov{a_i^{\al}} a_i^{\be}
 \ti{R}^{\al}_{\be \ga \ov{\de}} a^{\ga}_k \ov{a^{\de}_\ell}
 - \ov{a_i^{\al}} a_j^{\al} R_{ik\ov{\ell}}^j\right) \theta^k \wedge \ov{\theta^\ell}.
\end{eqnarray}
Then from Lemma \ref{lemmalap} we obtain
\begin{eqnarray}\label{final}
\frac{1}{2} \Delta u & = &  |a^{\alpha}_{ik}|^2 - \ov{a_i^{\al}} a_i^{\be} a_k^{\ga} \ov{a_k^{\de}} \ti{R}^{\al}_{\be \ga \ov{\de}} +
 \ov{a_i^{\al}} a_j^{\al} R'_{i \ov{j}} .
\end{eqnarray}
If the second Ricci curvature of $g$ is bounded below by $-K_1$ and the bisectional curvature of $\ti{g}$ is bounded
above by $-K_2<0$, then we get
$$\frac{1}{2} \Delta u\geq K_2 u^2-K_1 u.$$
Then Theorem \ref{maxprinc} gives that
$$\tr{g}{f^*\ti{g}}=u\leq \frac{K_1}{K_2},$$
which proves Theorem \ref{main1} since $f^*\ti{g}\leq u g$.\\

Now assume that $M$ and $\ti{M}$ have the same dimension $2n$. 
Define a function 
$$v=\frac{\det f^*\ti{g}}{\det g},$$
so that $f^* dV_{\ti{g}}=v dV_g.$ Then $f$ is non-degenerate precisely when $v>0$ and is
totally degenerate when $v\equiv 0.$ Locally $v=|\nu|^2$ where
$\nu=\det(a^\alpha_i).$
A computation in section 3 of \cite{GH2} (see also Lemma 3.2 in \cite{TWY}) gives
$$\frac{1}{2}\Delta v=vR-v\ti{R}_{\alpha\ov{\beta}}a^\alpha_i\ov{a^\beta_i}.$$
So if the scalar curvature of $g$ is bounded below by $-nK_1$ and the first Ricci curvature of $\ti{g}$ is bounded
above by $-K_2$, with $K_2>0$, then we get
$$\frac{1}{2}\Delta v\geq K_2 uv-nK_1 v\geq nK_2 v^{1+\frac{1}{n}}-nK_1 v,$$
where we used the arithmetic-geometric mean inequality.
Then Theorem \ref{maxprinc} gives that
$$v\leq \left(\frac{K_1}{K_2}\right)^n,$$
which proves Theorem \ref{main2}.

\section{Product of almost-complex manifolds}
In this section we prove Theorem \ref{main3}. We adapt the argument in \cite{SZ} to our case,
using again local holomorphic discs instead of complex coordinates, and applying our 
Theorem \ref{main1} and Proposition \ref{maxprinc2}.\\

Suppose $M=X\times Y$ is the product of two almost-complex manifolds of (real) dimensions $2n$ and $2m$ respectively.
Assume for a contradiction that $M$ admits a complete almost-Hermitian metric $g$ with torsion and $(2,0)$ part of the 
curvature bounded, and with bisectional curvature bounded between two negative constants, so that
$$-C_1<B(V,W)<-C_2<0$$
holds for all $V,W\in T'M$. Fix a point $q\in Y$ and pick 
$F:D\to Y$ a $J$-holomorphic disc with $F(0)=q$ and $F_*(e)\neq 0$.
Here again $D\subset \mathbb{C}$ is the unit disc and the existence of such a map is given by \cite{IR}.
Moreover, up to shrinking the disc, we can assume that the $F$ is an immersion, so that the 
vector field $V=F_*(\partial/\partial z)\in T'Y$ doesn't vanish on the image of $F$,
and that $T'Y$ can be trivialized in a neighborhood of the image.
For each $x\in X$ define a map
$G_x:D\to M$ by sending $z$ to $(x,F(z))$. Each $G_x$ is almost-complex with respect to the
given almost-complex structures and moreover the map $G:X\times D\to M$ given by $G(x,z)=G_x(z)$ is also
almost-complex. Take $\eta\in C^\infty_c(D)$ to be a smooth nontrivial cutoff function,
with $0\leq \eta\leq 1$, and define a smooth positive function $f$ on $M$ by
$$f(x,y)=f(x)=\int_D \eta G_x^*g.$$
Equip $D$ with the Poincar\'e metric $g_0$,
and apply Theorem \ref{main1} to $G_x$ to get
$$G_x^*g\leq \frac{1}{C_2}g_0,$$
which implies that $f$ is bounded above. Now fix a point $p=(x_0,q)\in M$ and pick $\{e_1,\dots,e_n\}$ a local
frame on $X$ around $x_0$, and $\{e_{n+1},\dots,e_{n+m}\}$ a local frame on $Y$ around the image of $F$. Then, by abusing
notation, we denote by $\{e_1,\dots,e_{n+m}\}$ the induced frame on $M$, which in general is not unitary.
Then locally the Hermitian metric $g$ on $T'M$ can be written as $g_{i\ov{j}}\theta^i\otimes\ov{\theta^j}$,
and on the image of $G_x$, $V$ is of the form $V=V^je_j$, where $n+1\leq j\leq n+m$. Moreover we can assume that at $p$ we have
$g_{i\ov{j}}=\delta_{ij}$ for $1\leq i,j\leq n$.
Then we can write
$$f=\sqrt{-1}\int_D\eta g_{j\ov{k}}V^j\ov{V^k}dz\wedge d\ov{z}.$$
Since $f$ is constant along $Y$,
we see that $f_j=f_{\ov{j}}=0$ for $n+1\leq j\leq n+m$. From now on fix $1\leq i\leq n$, and notice
that on $X\times D$ we have, by abusing notation,
$[\ov{e_i},\partial/\partial z]=0$. Hence $$0=G_*([\ov{e_i},\partial/\partial z])=[\ov{e_i},V],$$
and so \eqref{need1} gives $0=\theta^j([\ov{e_i},V])=V^j_{\ov{i}}$ for all $n+1\leq j\leq n+m$.
Hence
$$f_i=\sqrt{-1}\int_D \eta g_{j\ov{k}}V^j_i \ov{V^k}dz\wedge d\ov{z},$$
\begin{equation}\label{compt}
f_{i\ov{i}}=\sqrt{-1}\int_D \eta g_{j\ov{k}}\left(V^j_{i\ov{i}}\ov{V^k}+V^j_i\ov{V^k_i}\right)dz\wedge d\ov{z}
\geq \sqrt{-1}\int_D \eta g_{j\ov{k}}V^j_{i\ov{i}}\ov{V^k}dz\wedge d\ov{z},
\end{equation}
where we have used that $\nabla g=0$.
Now proceeding as in the derivation of \eqref{comput3}, we get
\begin{equation}\label{compt2}
\begin{split}
g_{j\ov{k}}V^j_{i\ov{i}}\ov{V^k}&=g_{j\ov{k}}\left(V^j_{\ov{i}i}\ov{V^k}-V^\ell\ov{V^k} R^j_{\ell i\ov{i}}\right)=
-g_{j\ov{k}}V^\ell\ov{V^k} R^j_{\ell i\ov{i}}\\
&\geq C_2 g_{j\ov{k}}V^j\ov{V^k}g_{i\ov{i}}.
\end{split}
\end{equation}
Denote by $h$ the almost-Hermitian metric on $X$ obtained by restricting $g$ to $X\times\{q\}$.
In \cite{Ko} it is proved that the bisectional curvature of an almost-complex submanifold is
always less than the one of the ambient space, and so the bisectional curvature of $h$
is bounded above by $-C_2$. The projection $\pi_1:(M,g)\to (X,h)$ is almost-complex and
Theorem \ref{main1} gives
$$\pi_1^*h\leq C_3 g,$$
where $C_3=\frac{(n+m)C_1}{C_2}$. This implies that
\begin{equation}\label{compt3}
g_{i\ov{i}}(x,q)\leq C_3 g_{i\ov{i}}(x,y)
\end{equation}
for any $(x,y)$ near $p$. Combining \eqref{compt}, \eqref{compt2} and \eqref{compt3} we get
$$f_{i\ov{i}}(x,y)\geq \frac{C_2}{C_3}g_{i\ov{i}}(x,q) f(x,y),$$
and so at $p$ we get $f_{i\ov{i}}\geq \alpha f$, where $\alpha=\frac{C_2}{C_3}>0$. Summing up 
and using Lemma \ref{lemmalap}
we get $\frac{1}{2}\Delta f\geq n\alpha f,$ and Proposition \ref{maxprinc2} applied to $-f$ gives $f=0$, which is absurd.

\end{document}